\theoremstyle{theorem}
\newtheorem{corollary}{Corollary}
\newtheorem{lemma}[corollary]{Lemma}
\newtheorem{lemma*}[lem6]{Lemma}
\newtheorem{theorem}[corollary]{Theorem}
\begin{document}

\AtEndDocument{%
  \par
  \medskip
  \begin{tabular}{@{}l@{}}%
    \textsc{Gabriel Coutinho}\\
    \textsc{Dept. of Computer Science} \\ 
    \textsc{Universidade Federal de Minas Gerais, Brazil} \\
    \textit{E-mail address}: \texttt{gabriel@dcc.ufmg.br} \\ \ \\
    \textsc{Emanuel Juliano} \\
    \textsc{Dept. of Computer Science} \\ 
    \textsc{Universidade Federal de Minas Gerais, Brazil} \\
    \textit{E-mail address}: \texttt{emanuelsilva@dcc.ufmg.br}\\ \ \\
    \textsc{Thomás Jung Spier} \\
    \textsc{Dept. of Computer Science} \\ 
    \textsc{Universidade Federal de Minas Gerais, Brazil} \\
    \textit{E-mail address}: \texttt{thomasjung@dcc.ufmg.br}
  \end{tabular}}

\title{Strong cospectrality in trees}
\author{Gabriel Coutinho\footnote{gabriel@dcc.ufmg.br --- remaining affiliations in the end of the manuscript.} \and Emanuel Juliano \and Thomás Jung Spier}
\date{\today}
\maketitle
\vspace{-0.8cm}

\begin{abstract} 
	We prove that no tree contains a set of three vertices which are pairwise strongly cospectral. This answers a question raised by Godsil and Smith in 2017.
\end{abstract}

\begin{center}
\textbf{Keywords}
strongly cospectral vertices ; trees ; continued fractions
\end{center}

\section{Introduction}\label{intro}

Let $G$ be a finite simple graph and $A$ its adjacency matrix. A continuous-time quantum walk can be defined having $G$ as an underlying graph, and in certain models where no external interference exists, all properties of the walk are determined by the spectrum of $A$. A desirable property for a quantum walk is that at certain times the quantum state input at a vertex is transferred to another --- if this occurs with probability $1$, then it is called perfect state transfer, and if it occurs with probability close to $1$, it is called pretty good state transfer.

In both cases, a necessary condition is that the two vertices involved are so that their projections onto the eigenspaces of the graph are either equal or minus each other, in which case the vertices will be called strongly cospectral. Precisely, if $a$ and $b$ are vertices of $G$ and $A = \sum_{r} \theta_r E_r$ is the spectral decomposition of $A$, then $a$ and $b$ are strongly cospectral if $E_r e_a = \pm E_r e_b$, for all $r$, where $e_a$ stands for the characteristic vector of the vertex $a$.

Strongly cospectral vertices have been extensively studied in \cite{godsil2017strongly} and we do not aim to survey all results therein. However, it is enlightening to realize that if two vertices are strongly cospectral, then they are cospectral in conventional usage of the term, meaning, the graphs obtained upon the removal of each are going to have the same spectrum. Cospectral vertices have been studied for a long time, and in the context of trees, they are a key piece in Schwenk's \cite{schwenk1973almost} seminal paper.

Two vertices $a$ and $b$ in $G$ are similar if there is an automorphism of the graph that maps $a$ to $b$. This automorphism, restricted to $G \setminus a$, implies that $G \setminus a \simeq G\setminus b$, and if the latter isomorphism exists even when no automorphism of $G$ maps $a$ to $b$, we say that $a$ and $b$ are pseudo-similar. Again, these concepts have been around for some time, see for instance \cite{kimble1981pseudosimilar,godsil1983graphs}. It is immediate to note that similar and pseudosimilar vertices are cospectral, so it is natural to wonder what is their connection to the concept of strong cospectrality. It is perhaps natural to expect that similar vertices are strongly cospectral but this is false --- for instance, no pair of vertices in $K_n$ with $n \geq 3$ is strongly cospectral. In fact, if $a$ and $b$ are strongly cospectral and there is an automorphism that fixes $a$, then it must also fix $b$. This suggests that strong cospectrality captures a sort of regularity or symmetry that must distinguish the pair of vertices from the remaining vertices of the graph.

Naturally at this point one gets suspicious of the fact that in a given graph there cannot be a set of three or more vertices which are pairwise strongly cospectral. This suspicion is further reinforced by the fact that perfect state transfer, the quantum property inspiring the definition of strong cospectrality, is indeed monogamous (see \cite{kay2011basics}): no three vertices in a given graph can be involved in perfect state transfer with each other. Yet, there are graphs with three or more vertices pairwise strongly cospectral. The easiest examples are vertices of smallest degree in cartesian powers of paths of different lenght, so as long they have simple eigenvalues. With simple eigevalues, cospectrality is equivalent to strong cospectrality (see \cite{godsil2017strongly}), it is enough to simply check that the number of closed walks of fixed length around the vertices is constant for all of them. 

So why bother with trees? First, there is special interest in understanding quantum walks in trees (see for instance \cite{CoutinhoLiu2}) because trees model quantum systems which are likely cheaper and easier to build. Unfortunately, there is no known example of perfect state transfer in tress with more than 3 vertices, and this may as well be a consequence of the fact that strong cospectrality in trees is not as common as it is for other graphs. In fact, our result is the first to display a disparity: even though there are graphs with arbitrarily large sets of pairwise strongly cospectral vertices, no such set will exist in a tree. Second, trees seem to behave differently than graphs when it comes to cospectrality. A famous example is the fact that almost all trees have a cospectral mate \cite{schwenk1973almost}, whereas the opposite is widely believed to be true for graphs in general (a conjecture due to W. Haemers). Our result shows that one further aspect of this difference, and therefore hopefully serves as inspiration for future investigations.

Third, and most importantly, the question on whether there are trees with three or more vertices pairwise strongly cospectral was asked by Godsil and Smith \cite{godsil2017strongly}. We answer it fully in the negative.

In Section \ref{sec:prelim} we introduce the basic facts and notation used throughout the paper. In Section \ref{sec:result}, we state a key lemma and prove our main result. Section \ref{sec:lemma} is dedicated to prove the key lemma.

\section{Graph spectra and polynomials}\label{sec:prelim}

In this paper, we will denote by $\phi^G$ the characteristic polynomial of the graph $G$ in the variable $t$. If $\theta_0,\theta_1,\dots, \theta_d$ are the distinct eigenvalues of the adjacency matrix $A$ of $G$, then we denote by $E_r$ the orthogonal projection onto the  $\theta_r$-eigenspace. 

Two vertices $i$ and $j$ of the graph $G$ are called \textit{cospectral} if $\phi^{G\setminus i}=\phi^{G\setminus j}$. Using walk generating functions, it is possible to write the entries of $E_r$ in terms of these polynomials, as follows:
\begin{equation}
(E_r)_{i,i} = \frac{(t-\theta_r)\phi^{G \setminus i}}{\phi^ G} \bigg|_{t = \theta_r}, \label{eq:diagonal}
\end{equation}

\noindent whereas this is well defined as $\theta_r$ is a pole of order at most $1$ in $\phi^{G\setminus i}/\phi^G$ (see \cite{GodsilAlgebraicCombinatorics,coutinho2021quantum}). From this, it follows that $i$ and $j$ are cospectral  if and only if, for every $r$ in $\{0,1,\dots,d\}$, $(E_r)_{i,i}=(E_r)_{j,j}$.

If, moreover, $E_r e_i = \pm E_r e_j$ for every $r$ in $\{0,1,\dots,d\}$, then $i$ and $j$ are \textit{strongly cospectral}. It is easy to verify that this is equivalent to requiring that they are cospectral and that $(E_r)_{i,i} = \pm (E_r)_{i,j}$ for every $r$ in $\{0,1,\dots,d\}$. Walk generating functions also provide the following expression
\begin{equation}(E_r)_{i,j} = \frac{(t-\theta_r)\sqrt{\phi^{G \setminus i}\phi^{G \setminus j} - \phi^G \phi^{G \setminus \{i,j\}}}}{\phi^ G} \bigg|_{t = \theta_r} \label{eq:offdiagonal}
\end{equation}
and so we obtain the following result:

\begin{theorem}[Corollary 8.4 in \cite{godsil2017strongly}]\label{thm:strcospec}
Vertices $i$ and $j$ of a graph $G$ are strongly cospectral if and only if $\phi^{G \setminus i} = \phi^{G \setminus j}$ and all poles of $\phi^{G\setminus \{i,j\}}/\phi^G$ are simple.
\end{theorem}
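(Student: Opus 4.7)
The plan is to reduce strong cospectrality to an algebraic condition on the rational function $\phi^{G\setminus\{i,j\}}/\phi^G$ by directly using the closed-form expressions \eqref{eq:diagonal} and \eqref{eq:offdiagonal}. The starting point is the characterization already noted in the text: $i$ and $j$ are strongly cospectral iff they are cospectral (equivalently $\phi^{G\setminus i}=\phi^{G\setminus j}$) and $(E_r)_{i,i}=\pm(E_r)_{i,j}$ for every $r$. So I would fix cospectrality as one half of the equivalence and spend the work translating the second half into the pole condition.

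The key observation is that $(E_r)_{i,i}=e_i^{T}E_r e_i=\|E_r e_i\|^2\ge 0$, so the signed equality $(E_r)_{i,i}=\pm(E_r)_{i,j}$ is equivalent to the squared equality $(E_r)_{i,i}^{2}=(E_r)_{i,j}^{2}$. The advantage is that squaring eliminates the square root in \eqref{eq:offdiagonal}, turning everything into a statement about ordinary rational functions. Specifically, after squaring,
\begin{equation*}
(E_r)_{i,i}^{2}-(E_r)_{i,j}^{2}=\left.\frac{(t-\theta_r)^{2}\bigl((\phi^{G\setminus i})^{2}-\phi^{G\setminus i}\phi^{G\setminus j}+\phi^{G}\phi^{G\setminus\{i,j\}}\bigr)}{(\phi^{G})^{2}}\right|_{t=\theta_r}.
\end{equation*}
Under the cospectrality hypothesis $\phi^{G\setminus i}=\phi^{G\setminus j}$, the first two terms in the numerator cancel and the whole expression collapses to $\bigl[(t-\theta_r)^{2}\phi^{G\setminus\{i,j\}}/\phi^{G}\bigr]_{t=\theta_r}$.

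Finally I would read off the pole condition: this evaluation vanishes exactly when $\theta_r$ is a pole of order at most one of $\phi^{G\setminus\{i,j\}}/\phi^{G}$, because a pole of order at most one is killed by the factor $(t-\theta_r)^{2}$ after taking the limit, whereas a pole of order two or more leaves a nonzero residual. Requiring this for every eigenvalue $\theta_r$ of $A$ is precisely the statement that all poles of $\phi^{G\setminus\{i,j\}}/\phi^{G}$ are simple (a pole of $\phi^{G\setminus\{i,j\}}/\phi^{G}$ must be a zero of $\phi^{G}$, hence an eigenvalue). Combined with $\phi^{G\setminus i}=\phi^{G\setminus j}$, this yields the stated equivalence.

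The calculation itself is routine; the only mild subtlety is justifying that the signed and squared versions of the condition are equivalent (via non-negativity of the diagonal of a projection), and being careful that the squared expression in \eqref{eq:offdiagonal} is a well-defined polynomial so that dividing by $(\phi^{G})^{2}$ really produces a rational function whose pole order at $\theta_r$ controls the evaluation. These are the only points I would pause to verify; everything else is algebraic simplification.
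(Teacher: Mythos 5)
Your argument is correct. Note that the paper does not prove this statement at all --- it is imported verbatim as Corollary 8.4 of the cited Godsil--Smith paper --- so there is no internal proof to compare against; what you have written is a legitimate self-contained derivation from exactly the ingredients the paper does supply, namely \eqref{eq:diagonal}, \eqref{eq:offdiagonal}, the reduction of strong cospectrality to cospectrality plus $(E_r)_{i,i}=\pm(E_r)_{i,j}$, and Lemma~\ref{lem:wronskian} (which guarantees the quantity under the square root in \eqref{eq:offdiagonal} is the square of a polynomial, so that your squared expression is an honest rational function). The algebra checks out: under $\phi^{G\setminus i}=\phi^{G\setminus j}$ the difference $(E_r)_{i,i}^2-(E_r)_{i,j}^2$ collapses to $\bigl[(t-\theta_r)^2\phi^{G\setminus\{i,j\}}/\phi^G\bigr]_{t=\theta_r}$, and interlacing (or simply the finiteness of the left-hand side) caps the pole order at $2$, so vanishing for every $\theta_r$ is exactly simplicity of all poles. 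Two cosmetic remarks: the non-negativity of $(E_r)_{i,i}$ is not actually needed, since $a=\pm b\iff a^2=b^2$ holds for all real $a,b$; and you should say explicitly that the square of the limit equals the limit of the square because the limit defining $(E_r)_{i,j}$ exists and is finite. Compared with Godsil and Smith's own route (via the notion of parallel vertices and the rank of the $2\times 2$ walk-matrix), your computation is more direct but proves only this corollary rather than the intermediate structural statements.
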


Note that as a consequence, if $i$ and $j$ are in distinct connected components of the graph $G$, then $i$ and $j$ are not strongly cospectral.

At first sight, the expression within the square root in \eqref{eq:offdiagonal} is not clearly a perfect square, but in fact it is, and given by the following expression:

\begin{lemma}[Lemma 2.1 in \cite{GodsilAlgebraicCombinatorics}]\label{lem:wronskian} 
Let $i$ and $j$ be vertices in the graph $G$. Then,
\[\phi^{G\setminus i}\phi^{G\setminus j}-\phi^{G\setminus\{i,j\}}\phi^G=\left(\sum_{P:i\to j}\phi^{G\setminus P}\right)^2,\]
where the sum is over all the paths from $i$ to $j$.
\end{lemma}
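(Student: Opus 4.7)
The plan is to prove this identity via the matrix resolvent $M(t) := (tI - A)^{-1}$, whose entries encode both characteristic polynomials of vertex-deleted subgraphs (on the diagonal) and path generating polynomials (off the diagonal), and then to apply Jacobi's identity on $2\times 2$ minors. The paper already notes (just before equation \eqref{eq:diagonal}) that $M(t)_{ii} = \phi^{G\setminus i}/\phi^G$. The analogous off-diagonal identity, which I would establish first, is
\[
M(t)_{ij} \;=\; \frac{\sum_{P:i\to j}\phi^{G\setminus P}}{\phi^G}.
\]
To prove this, I would use the cofactor formula $M(t)_{ij} = (-1)^{i+j}\det((tI-A)[j\!\mid\! i])/\phi^G$, where $(tI-A)[j\!\mid\! i]$ denotes the matrix with row $j$ and column $i$ deleted, and then expand the determinant as a signed sum over permutations. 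Grouping each permutation by the (unique) orbit joining $i$ to $j$ identifies that orbit with a path $P$ in $G$ from $i$ to $j$, and the remaining cycles form an elementary subgraph of $G\setminus P$ whose contribution sums to $\phi^{G\setminus P}$. This is a standard combinatorial determinant expansion, and it is the step most likely to need care with signs, since the transposition sign from the cofactor must cancel the cycle-type signs arising from the permutation expansion of $\det(tI - A)$ restricted away from $P$.

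Once both resolvent formulas are in hand, I would apply Jacobi's (adjugate) identity to the principal $2\times 2$ submatrix of $M(t)$ indexed by $\{i,j\}$:
\[
\det\begin{pmatrix} M(t)_{ii} & M(t)_{ij} \\ M(t)_{ji} & M(t)_{jj}\end{pmatrix} \;=\; \frac{\det\bigl((tI-A)[\{i,j\}\!\mid\!\{i,j\}]\bigr)}{\det(tI-A)} \;=\; \frac{\phi^{G\setminus\{i,j\}}}{\phi^G}.
\]
Since $A$ is symmetric we have $M(t)_{ij} = M(t)_{ji}$, so substituting the three resolvent formulas yields
\[
\frac{\phi^{G\setminus i}\,\phi^{G\setminus j}}{(\phi^G)^2} \;-\; \left(\frac{\sum_{P:i\to j}\phi^{G\setminus P}}{\phi^G}\right)^{\!2} \;=\; \frac{\phi^{G\setminus\{i,j\}}}{\phi^G}.
\]
Clearing denominators by $(\phi^G)^2$ and rearranging produces the claimed identity. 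The main obstacle, as noted, is the combinatorial justification of the off-diagonal resolvent formula and the sign bookkeeping therein; everything else is algebraic manipulation. As a sanity check, both sides are polynomials in $t$, so although the derivation passes through rational functions, the equality holds as an identity in the polynomial ring, which in particular justifies that the right-hand side is a perfect square as asserted.
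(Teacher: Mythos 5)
Your argument is correct: the off-diagonal resolvent (adjugate) formula in terms of paths, combined with Jacobi's identity on the complementary $2\times 2$ minor and the symmetry $M(t)_{ij}=M(t)_{ji}$, does yield the identity after clearing $(\phi^G)^2$. The paper itself gives no proof of this lemma --- it is quoted from Godsil's \emph{Algebraic Combinatorics} --- and your route is essentially the standard one used there, with the only substantive work being the combinatorial expansion of the cofactor $\det\bigl((tI-A)[j\mid i]\bigr)$ into a sum over paths, which you correctly identify as the step requiring careful sign bookkeeping.
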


Our proof will require manipulating ratios of characteristic polynomials of a graph and its vertex deleted subgraphs, and for that end, we have found it more convenient to introduce the following notation. Given a graph $G$ and vertex $i$, have
\begin{equation}
\alpha_i^G =\dfrac{\phi^G}{\phi^{G\setminus i}}.
\label{eq:alpha}
\end{equation}

We end this section establishing a description of the graph of this rational function.

\begin{lemma}[Theorem 1.5 in \cite{GodsilAlgebraicCombinatorics} ]\label{lem:derivative}
Let $G$ be a graph. Then, the derivative of $\phi^G$ is given by $(\phi^G)'=\sum_{i\in V(G)}\phi^{G\setminus i}$.
\end{lemma}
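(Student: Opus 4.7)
The plan is to use the determinant formulation $\phi^G(t) = \det(tI - A)$ and differentiate it in a way that naturally produces the $\phi^{G\setminus i}$ terms. The cleanest route I would take is via multilinearity of the determinant in the rows of $tI - A$: the $i$-th row is the affine function $t\, e_i^T - a_i^T$ of $t$, where $a_i^T$ denotes the $i$-th row of $A$, so differentiation acts on one row at a time.

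Concretely, by the product rule for multilinear maps,
\[
\frac{d}{dt} \det(tI - A) \;=\; \sum_{i \in V(G)} \det M_i(t),
\]
where $M_i(t)$ is obtained from $tI - A$ by replacing its $i$-th row by $e_i^T$. I would then expand $\det M_i(t)$ along this $i$-th row: only the $(i,i)$-entry contributes, and the associated cofactor has sign $(-1)^{i+i} = 1$ and equals the determinant of the submatrix of $tI - A$ with row $i$ and column $i$ deleted. Since removing the $i$-th row and column of $A$ yields the adjacency matrix of $G \setminus i$, this cofactor is precisely $\phi^{G \setminus i}(t)$, and the identity follows.

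I do not anticipate a substantial obstacle: the argument reduces to multilinearity of $\det$ together with the standard fact that principal $(n-1)\times(n-1)$ minors of $tI - A$ are characteristic polynomials of vertex-deleted subgraphs. The only detail worth flagging is that the $(i,i)$ cofactor sign is $+1$, so no cancellations occur and every $\phi^{G\setminus i}$ appears with coefficient one. An equivalent presentation would invoke Jacobi's formula $\frac{d}{dt}\det M(t) = \operatorname{tr}\bigl(\operatorname{adj}(M(t))\, M'(t)\bigr)$ with $M(t) = tI - A$ and $M'(t) = I$, which reduces everything to identifying the diagonal entries of $\operatorname{adj}(tI - A)$ with the same family of cofactors; as a quick sanity check, both sides of the claimed identity are monic polynomials of degree $n-1$, which at least matches the leading term.
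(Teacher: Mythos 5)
Your argument is correct. Note that the paper does not actually prove this lemma --- it is quoted as Theorem 1.5 of Godsil's \emph{Algebraic Combinatorics} --- so there is no in-paper proof to compare against; what you give is the standard derivation: multilinearity of $\det$ in the rows of $tI-A$ (each row being affine in $t$ with derivative $e_i^T$), followed by expansion along the replaced row to identify the $(i,i)$ principal minor of $tI-A$ with $\phi^{G\setminus i}$, and the sign $(-1)^{2i}=+1$ guarantees no cancellation. The Jacobi-formula phrasing you mention is the same computation in different clothing. One trivial slip in your sanity check: neither side is monic --- the left side is the derivative of a monic degree-$n$ polynomial and the right side is a sum of $n$ monic polynomials of degree $n-1$, so both have leading term $n\,t^{n-1}$; the leading terms still agree, which is the point of the check.
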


\begin{lemma}\label{lemma:derivative_quotient} Let $i$ be a vertex in the graph $G$. Then, $(\alpha_i^G)'(t)\geq 1$ for every $t$ that is not a pole of $\alpha_i^G$. In particular, $\alpha_i^G(t)$ has only simple zeros and poles, and is increasing and surjective on each of its branches.
\end{lemma}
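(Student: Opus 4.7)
The plan is to obtain a closed form for $(\alpha_i^G)'$ using Lemmas \ref{lem:derivative} and \ref{lem:wronskian}. Starting from the quotient rule
\[
(\alpha_i^G)' = \frac{(\phi^G)'\,\phi^{G\setminus i} - \phi^G\,(\phi^{G\setminus i})'}{(\phi^{G\setminus i})^2},
\]
I substitute $(\phi^G)' = \phi^{G\setminus i} + \sum_{k\neq i}\phi^{G\setminus k}$ and $(\phi^{G\setminus i})' = \sum_{k\neq i}\phi^{G\setminus\{i,k\}}$ into the numerator, which becomes $(\phi^{G\setminus i})^2 + \sum_{k\neq i}\bigl(\phi^{G\setminus i}\phi^{G\setminus k} - \phi^G\phi^{G\setminus\{i,k\}}\bigr)$. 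By Lemma \ref{lem:wronskian} each bracketed term is a perfect square, so
\[
(\alpha_i^G)'(t) = 1 + \frac{\sum_{k\neq i}\bigl(\sum_{P:i\to k}\phi^{G\setminus P}(t)\bigr)^2}{\phi^{G\setminus i}(t)^2}.
\]
At every real $t$ with $\phi^{G\setminus i}(t)\neq 0$ this is at least $1$, and since $(\alpha_i^G)'$ is continuous on the complement of the poles of $\alpha_i^G$, the inequality extends by continuity to every $t$ that is not a pole (the additional points are the finitely many common zeros of $\phi^G$ and $\phi^{G\setminus i}$).

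Three of the four conclusions then follow immediately. Strict monotonicity on each branch is just $(\alpha_i^G)' \geq 1 > 0$, and if $t_0$ is a zero of $\alpha_i^G$ then $(\alpha_i^G)'(t_0)\neq 0$, so the zero is simple. For surjectivity I would argue branch by branch: on an interval of the domain bounded by two (simple) poles, monotonicity forces $\alpha_i^G\to -\infty$ at the left endpoint and $\alpha_i^G\to +\infty$ at the right, giving image $\mathbb{R}$; on the two unbounded branches the asymptotic $\alpha_i^G(t) = t + O(1)$ at $\pm\infty$ combined with monotonicity yields the same conclusion.

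The subtlest step is the simplicity of the poles. The derivative bound alone only rules out even-order poles, because at an even-order pole the function would go to the same infinity on both sides and violate monotonicity on one of them; it does not by itself distinguish order $1$ from higher odd orders. To finish, I would invoke Cauchy interlacing between the eigenvalues of $A$ and those of the principal submatrix $A[V\setminus i]$: the multiplicity of any real number as a root of $\phi^{G\setminus i}$ differs from its multiplicity as a root of $\phi^G$ by at most one, so $\alpha_i^G$ has only simple poles. Alternatively, one can use the spectral identity $\phi^{G\setminus i}/\phi^G = \sum_r (E_r)_{i,i}/(t-\theta_r)$ with nonnegative coefficients; its derivative is strictly negative on each branch, hence its zeros---which are precisely the poles of $\alpha_i^G$---are simple.
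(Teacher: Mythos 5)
Your derivation of the identity
\[
(\alpha_i^G)'(t) \;=\; 1 + \sum_{k\neq i}\left(\frac{\sum_{P:i\to k}\phi^{G\setminus P}(t)}{\phi^{G\setminus i}(t)}\right)^{2}
\]
via the quotient rule, Lemma~\ref{lem:derivative} and Lemma~\ref{lem:wronskian} is exactly the paper's computation, and your treatment of monotonicity, simple zeros, surjectivity, and the continuity extension to common zeros of $\phi^G$ and $\phi^{G\setminus i}$ matches the intended argument. The one place you genuinely diverge is the simplicity of the poles, and you are right to flag it as the subtle step: the derivative bound rules out even-order poles but not higher odd-order ones. The paper closes this gap with a counting argument: since $\deg\phi^G=\deg\phi^{G\setminus i}+1$, the number of zeros of $\alpha_i^G$ exceeds the number of poles counted with multiplicity by one; but monotonicity allows at most one zero per branch, i.e.\ at most (number of distinct poles)${}+1$ zeros in total, which forces every pole to be simple. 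Your two alternatives --- Cauchy interlacing for the principal submatrix $A[V\setminus i]$, or the partial-fraction expansion $\phi^{G\setminus i}/\phi^G=\sum_r (E_r)_{i,i}/(t-\theta_r)$ with nonnegative residues --- are both standard and correct, though they import spectral facts from outside the lemma's toolkit, whereas the paper's count is self-contained given what has already been proved. Either route is fine; no gap.
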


\begin{proof} Naturally, all zeros and poles of $\alpha_i^G$ are real. By taking the derivative in \eqref{eq:alpha} and by Lemmas \ref{lem:derivative} and \ref{lem:wronskian}, it follows that
	
\[(\alpha_i^G)'= 1+\sum_{j\in V(G\setminus i)}\left(\dfrac{\sum_{P:i\to j}\phi^{G\setminus P}}{\phi^{G\setminus i}}\right)^2.\]

This implies that $(\alpha_i^G)'(t)\geq 1$ for every $t$ that is not a zero of $\phi^{G\setminus i}$. It follows by continuity that $(\alpha_i^G)'(t)\geq 1$ for every $t$ that is not a pole of $\alpha_i^G$. As a consequence, $\alpha_i^G$ is increasing and surjective in each of its branches and all of its zeros are simple.

Since $\deg(\phi^G)=\deg(\phi^{G\setminus i})+1$, the number of zeros of $\alpha_i^G$ is one more than the number of poles counted with multiplicity of $\alpha_i^G$. But in each branch, because $\alpha_i^G$ is increasing, there can only be one zero of $\alpha_i^G$. Putting this all together, it follows that all the poles of $\alpha_i^G$ are also simple.
\end{proof}

\section{Main result}\label{sec:result}

The result we prove in this section is that no tree has three (or more) pairwise strongly cospectral vertices. First, we show that if such a set of pairwise strongly cospectral vertices exist, then there must be a cut vertex whose removal separates all three of them in different connected components. In fact, note that it is enough to assume they are pairwise cospectral.

\begin{lemma}\label{lem:notinpath}
If three vertices in a tree are pairwise cospectral, then they do not lie on a path.
\end{lemma}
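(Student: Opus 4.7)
Suppose for contradiction that $a,b,c$ are pairwise cospectral in $T$ with $b$ lying on the path from $a$ to $c$. Write this path as $a=v_0,v_1,\ldots,v_m=b,\ldots,v_k=c$, and for each $i$ let $H_i$ denote the forest consisting of the subtrees of $T$ attached to $v_i$ away from the path. The vertex-deletion recursion for $\phi^T$ applied successively along the path writes each of $\alpha_a^T,\alpha_b^T,\alpha_c^T$ as a continued fraction in the rational functions $h_i(x)=\sum_{w}1/\alpha_w^{H_{i,w}}$, where the sum is over neighbours $w$ of $v_i$ lying in $H_i$ and $H_{i,w}$ denotes the component of $H_i$ containing $w$. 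By \eqref{eq:alpha}, cospectrality says all three are the same rational function.

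My first step would be to combine three instances of Lemma~\ref{lem:wronskian} (one per pair in $\{a,b,c\}$) with the tree identity $\phi^{T\setminus P_{ab}}\cdot\phi^{T\setminus P_{bc}}=\phi^{T\setminus P_{ac}}\cdot\phi^{T\setminus b}$, which holds whenever $b$ lies on the path from $a$ to $c$ because the hanging-subtree factorizations of each $\phi^{T\setminus P}$ reassemble as required. Writing $\psi=\phi^{T\setminus a}$ and using cospectrality, each Wronskian simplifies to $\psi^2=\phi^T\phi^{T\setminus\{x,y\}}+(\phi^{T\setminus P_{xy}})^2$, and eliminating the three $\phi^{T\setminus P}$ quantities produces the single polynomial identity
\[
\phi^T\cdot\phi^{T\setminus\{a,b\}}\cdot\phi^{T\setminus\{b,c\}} \;=\; \psi^2\bigl(\phi^{T\setminus\{a,b\}}+\phi^{T\setminus\{b,c\}}-\phi^{T\setminus\{a,c\}}\bigr).
\]

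Next, I would translate this into a relation among the $h_i$'s and exploit Lemma~\ref{lemma:derivative_quotient}. In the smallest case $k=2$ (so $b$ is adjacent to both $a$ and $c$), the identity together with $\alpha_a^T=\alpha_c^T$ reduces by a direct computation to the two requirements $h_0=h_k$ and $(x-h_0)(h_0-h_1)=1$. Rearranging the second as $h_0-h_1=1/\alpha_{v_0}^{H_0^+}$, where $H_0^+$ denotes the rooted tree obtained by attaching $v_0$ to $H_0$, Lemma~\ref{lemma:derivative_quotient} expresses the right side as a sum $\sum_r c_r/(x-\theta_r)$ with $c_r>0$ and $\theta_r$ ranging over the eigenvalues of $H_0^+$. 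On the left, $h_0-h_1$ has poles only among the eigenvalues of $H_0\cup H_1$, with positive residues coming from $h_0$ and non-positive ones from $-h_1$. The strict Perron inequality for connected graphs gives $\lambda_{\max}(H_0^+)>\lambda_{\max}(H_0)$, so at $\theta=\lambda_{\max}(H_0^+)$ the right-hand residue is strictly positive while the left-hand residue is at most zero, a contradiction.

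The hard part will be extending this residue comparison to arbitrary $k$. For longer paths, $\alpha_a^T=\alpha_b^T$ becomes a M\"obius-type relation between the full right continued fraction from $v_1$ and the tail continued fraction from $v_{m+1}$, with coefficients built from $h_0,\ldots,h_m$. I anticipate that evaluating near the Perron eigenvalue of one of the outermost subtrees --- for instance $H_0^+$ or the tree spanned by $v_{m+1},\ldots,v_k$ together with their hanging forests --- and matching the leading singular behaviour will produce the same kind of strict Perron-style sign contradiction that settled the $k=2$ case.
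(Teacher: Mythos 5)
Your approach is genuinely different from the paper's, and several of its ingredients check out: the factorization $\phi^{T\setminus P_{ab}}\phi^{T\setminus P_{bc}}=\phi^{T\setminus P_{ac}}\phi^{T\setminus b}$ is a correct consequence of how the components of $T\setminus b$ split, and combining it with three instances of Lemma~\ref{lem:wronskian} does yield the displayed polynomial identity. But the proof as written has a genuine gap: the case of a general path is never actually handled. You say explicitly that ``the hard part will be extending this residue comparison to arbitrary $k$'' and then only \emph{anticipate} that a M\"obius-type relation between continued-fraction tails will produce a sign contradiction near a Perron eigenvalue. That is the entire content of the lemma for all but the shortest configuration, and no argument is given for it. Even in the $k=2$ case, the claim that cospectrality ``reduces by a direct computation to $h_0=h_k$ and $(x-h_0)(h_0-h_1)=1$'' skips the step of ruling out the other algebraic branches of the resulting equations (e.g.\ the branch $(x-h_0)(x-h_1)=2$ arising from the quadratic you implicitly solve); these can be excluded by comparing behaviour as $x\to\infty$, but that needs to be said.

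The larger issue is that the machinery is obscuring a one-line argument that your own final step already contains in embryo. Cospectrality of $x$ and $y$ means $\phi^{T\setminus x}=\phi^{T\setminus y}$, so in particular the \emph{largest roots} of these polynomials agree, i.e.\ $\theta_1(T\setminus x)=\theta_1(T\setminus y)$. If $b$ lies between $a$ and $c$ on a path, every connected component of $T\setminus b$ is a proper subgraph of a connected component of $T\setminus a$ or of $T\setminus c$, so by the strict Perron--Frobenius inequality for connected graphs, $\theta_1(T\setminus b)<\max\{\theta_1(T\setminus a),\theta_1(T\setminus c)\}$. Hence $b$ cannot be cospectral to both $a$ and $c$. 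This is the paper's proof; it needs no continued fractions, no Wronskian identities, and no induction on the path length. I would recommend replacing your argument with this spectral-radius comparison, or, if you want to salvage the continued-fraction route, you must supply the full argument for arbitrary $k$ rather than a forecast of one.
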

\begin{proof}
Let $T$ be a tree as in Figure~\ref{fig:same_path}, where the vertices $i$, $j$ and $k$ are on the same path. We will prove that $j$ cannot be cospectral to both $i$ and $k$. 

\begin{figure}
    \centering
    \includegraphics[scale=0.2]{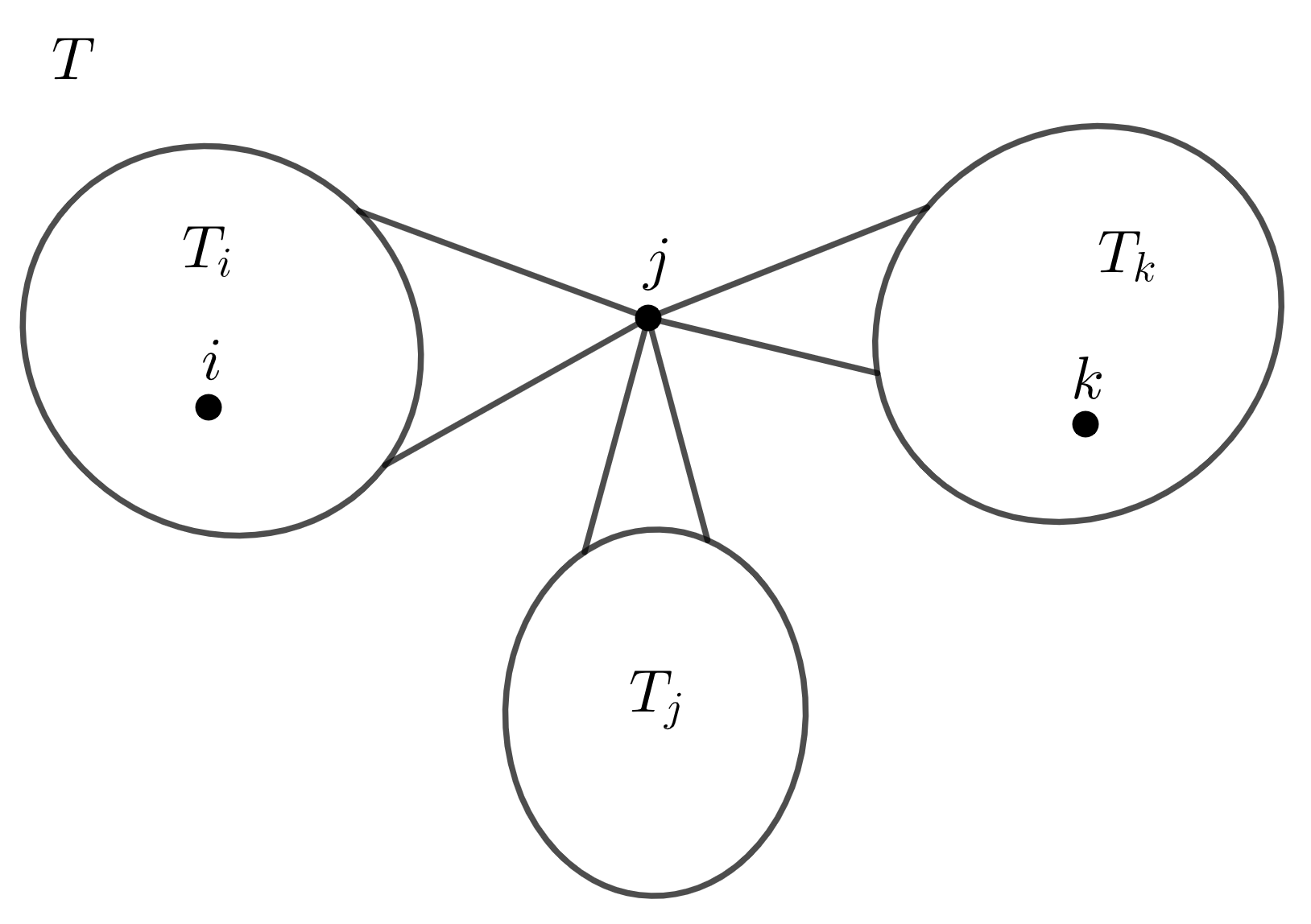}
    \caption{Representation of the tree $T$ with vertices $i$, $j$ and $k$ on the same path.}
    \label{fig:same_path}
\end{figure}

Let $\theta_1(G)$ be the largest eigenvalue of the adjacency matrix of the graph $G$. As $j$ is a cut-vertex, it follows that $\theta_1(T\setminus j)=\max\{\theta_1(T_i),\theta_1(T_j),\theta_1(T_k)\}$. But since $\theta_1(H) < \theta_1(G)$ for every proper subgraph $H$ of a connected graph $G$ (as a consequence of the Perron-Frobenius Theorem, see \cite[Chapter 8]{GodsilRoyle}), we have $\theta_1(T_i) < \theta_1(T \setminus k)$; $\theta_1(T_j) < \theta_1(T \setminus i), \theta_1(T \setminus k)$; $\theta_1(T_k) < \theta_1(T \setminus i)$.

Thus $\theta_1(T \setminus j) < \max\{\theta_1(T \setminus i), \, \theta_1(T \setminus k)\}$, and therefore $j$ cannot be cospectral to both $i$ and $k$.
\end{proof}

Next, we state a main technical lemma, which we will prove in the next section.

\begin{lemma} \label{lem:restriction} Assume vertices $i$, $j$ and $k$ are pairwise strongly cospectral in a graph $G$, and that there exists $v$ so that each $i$, $j$ and $k$ lie in a different component of $G \setminus v$. Thus, for any $\theta \in \Rds$, if $\alpha_i^{G\setminus v}(\theta)=0$, then $\alpha_j^{G\setminus v}(\theta)=\alpha_k^{G\setminus v}(\theta)\neq 0$.
\end{lemma}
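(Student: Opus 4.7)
The plan is to exploit the cut-vertex structure at $v$ and reduce $\phi^{G\setminus j}(\theta)$ and $\phi^{G\setminus k}(\theta)$ to explicit product formulas by means of the cut-vertex expansion, and then to use cospectrality to get the desired equality. Let $G_\ell$ denote the component of $G\setminus v$ containing the prescribed vertex ($G_i$ contains $i$, etc.) and let $B_\ell = G[V(G_\ell) \cup \{v\}]$ be the corresponding branches. As a rational function $\alpha_i^{G\setminus v}$ simplifies to $\alpha_i^{G_i}$, since the factors from other components cancel; the hypothesis $\alpha_i^{G_i}(\theta) = 0$ combined with Lemma~\ref{lemma:derivative_quotient} then says that $\phi^{G_i}(\theta) = 0$ is a simple zero and $\phi^{G_i \setminus i}(\theta) \neq 0$. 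Moreover, for $\alpha_i^{G\setminus v}(\theta)$ to equal $0$ (rather than being the indeterminate form $0/0$), we must also have $\phi^{G_\ell}(\theta) \neq 0$ for every $\ell \neq i$.

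The central computation is to evaluate $\phi^{G \setminus j}$ at $\theta$ using the cut-vertex formula. Writing $\sigma_\ell = t - \alpha_v^{B_\ell}$, we have $\sigma_\ell \phi^{G_\ell} = t\phi^{G_\ell} - \phi^{B_\ell}$, so in the expansion $\phi^{G \setminus j} = \alpha_v^{G\setminus j} \cdot \phi^{(G\setminus j)\setminus v}$ every contribution with a factor of $\phi^{G_i}$ multiplying a finite quantity vanishes at $\theta$, while the combination with the singular $\sigma_i$ reduces cleanly, giving
\[
\phi^{G \setminus j}(\theta) \;=\; \phi^{B_i}(\theta)\cdot \phi^{G_j \setminus j}(\theta)\cdot \prod_{\ell \neq i,j} \phi^{G_\ell}(\theta),
\]
and analogously $\phi^{G\setminus k}(\theta) = \phi^{B_i}(\theta)\cdot \phi^{G_k \setminus k}(\theta)\cdot \prod_{\ell \neq i,k} \phi^{G_\ell}(\theta)$.

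\textbf{Case A} ($\phi^{G \setminus i}(\theta) \neq 0$). By cospectrality $\phi^{G\setminus j}(\theta) \neq 0$, and the factored formula forces each of $\phi^{B_i}(\theta)$, $\phi^{G_j \setminus j}(\theta)$, $\phi^{G_k \setminus k}(\theta)$ to be nonzero. Equating the formulas for $\phi^{G\setminus j}(\theta)$ and $\phi^{G\setminus k}(\theta)$ and cancelling the common nonzero factor $\phi^{B_i}(\theta)\prod_{\ell \neq i,j,k}\phi^{G_\ell}(\theta)$ yields
\[
\phi^{G_j \setminus j}(\theta)\,\phi^{G_k}(\theta) \;=\; \phi^{G_k \setminus k}(\theta)\,\phi^{G_j}(\theta),
\]
which rearranges to $\alpha_j^{G_j}(\theta) = \alpha_k^{G_k}(\theta)$, both finite and nonzero.

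\textbf{Case B} ($\phi^{G\setminus i}(\theta) = 0$). Now $\phi^{G\setminus j}(\theta) = \phi^{G\setminus k}(\theta) = 0$ and the plain cancellation fails, so either $\phi^{B_i}(\theta)=0$ or one of $\phi^{G_j \setminus j}(\theta)$, $\phi^{G_k \setminus k}(\theta)$ vanishes. To rule out the "degenerate" configuration where $\phi^{G_j \setminus j}(\theta)=0$ or $\phi^{G_k \setminus k}(\theta)=0$, I would combine the Wronskian identity of Lemma~\ref{lem:wronskian} — which through the cut vertex factors as $f_{ij}^G = \phi^{G_k}\prod_{\ell \neq i,j,k}\phi^{G_\ell}\cdot f_{i,v}^{B_i}\,f_{v,j}^{B_j}$ and similarly for $(j,k)$ and $(i,k)$ — with the strong-cospectrality input from Theorem~\ref{thm:strcospec} (simple poles of $\phi^{G\setminus\{i,j\}}/\phi^G$ and $\phi^{G\setminus\{j,k\}}/\phi^G$) and the uniformly simple zero/pole constraint from Lemma~\ref{lemma:derivative_quotient} applied to each $\alpha_v^{G\setminus x}$. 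A coordinated order-counting along these identities forces $\phi^{G_j\setminus j}(\theta)$ and $\phi^{G_k \setminus k}(\theta)$ to be nonzero, at which point Case A's cancellation applies (after passing to first-order Taylor coefficients at $\theta$) to deliver the equality $\alpha_j^{G_j}(\theta) = \alpha_k^{G_k}(\theta)$. This second case is what I expect to be the main obstacle: cospectrality alone is manifestly insufficient, and the delicate point is that the three pairwise strong-cospectrality conditions must conspire to exclude the degenerate subcase.
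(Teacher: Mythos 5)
Your Case A is essentially correct and follows a genuinely different route from the paper's: you expand $\phi^{G\setminus j}$ and $\phi^{G\setminus k}$ directly through the cut vertex and cancel common factors, whereas the paper runs everything through the contraction identities \eqref{eq:alphaslambdas} relating $\alpha_i^G$, $\alpha_i^{G\setminus j}$, $\alpha_j^{G\setminus i}$ and $\lambda_{i j}^G$. However, the proposal has a genuine gap: Case B, which is exactly where the content of the lemma lives, is only a declared intention (``a coordinated order-counting \dots forces \dots'') and is not carried out. Note that in Case B the vanishing of $\phi^{G\setminus j}(\theta)=\phi^{B_i}(\theta)\,\phi^{G_j\setminus j}(\theta)\prod_{\ell\neq i,j}\phi^{G_\ell}(\theta)$ may come from $\phi^{G_k}(\theta)=0$, i.e.\ precisely from the configuration $\alpha_k^{G\setminus v}(\theta)=0$ that the lemma must exclude; your sketch only discusses ruling out $\phi^{G_j\setminus j}(\theta)=0$ or $\phi^{G_k\setminus k}(\theta)=0$, and says nothing about $\phi^{B_i}(\theta)=0$, about other components vanishing simultaneously, or about why the common value is nonzero in this case. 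Since Case A uses only cospectrality, and the conclusion fails for merely cospectral triples, all of the strong-cospectrality input must be absorbed by Case B. The paper does this via Corollary~\ref{lambda_not_zero} (strong cospectrality of $i$ and $j$ forces $\lambda_{i v}^G(\theta)\neq 0$), then Lemma~\ref{forcing_v_in_infty} and Lemma~\ref{forcing_equality} (which force $\alpha_v^{G\setminus j}(\theta)=\alpha_v^{G\setminus\{j,k\}}(\theta)=\infty$ and hence $\alpha_j^{G}(\theta)=\alpha_j^{G\setminus v}(\theta)=\alpha_j^{G\setminus k}(\theta)$, through careful bookkeeping of values in $\Rds\cup\{\infty\}$), and finally Corollary~\ref{equality_of_alphas} (strong cospectrality of $j$ and $k$ rules out the common value being zero). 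None of this is replaced by your order-counting plan, so the proof is incomplete at its crucial point.

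A secondary error: from $\alpha_i^{G\setminus v}(\theta)=0$ you infer $\phi^{G_\ell}(\theta)\neq 0$ for all $\ell\neq i$ ``to avoid the indeterminate form $0/0$.'' Under the paper's conventions the $\alpha$'s are rational functions evaluated in reduced form, and the factors $\phi^{G_\ell}$ for $\ell\neq i$ cancel identically between $\phi^{G\setminus v}$ and $\phi^{G\setminus\{v,i\}}$, so their vanishing at $\theta$ is in no way constrained by the hypothesis. This claim is not load-bearing in Case A (there you legitimately get nonvanishing of all factors from $\phi^{G\setminus i}(\theta)\neq 0$), but it cannot be invoked to dispatch the degenerate subcases of Case B.
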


The last lemma describes a situation that is not possible.

\begin{theorem}\label{main_result} Let $G$ be a graph with three pairwise cospectral vertices $i$, $j$ and $k$, and assume that there is a cut-vertex $v$ such that these cospectral vertices are in distinct connected components of $G\setminus v$. Then, one of the pairs of cospectral vertices is not strongly cospectral.
\end{theorem}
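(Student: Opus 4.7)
Assume for contradiction that all three pairs in $\{i,j,k\}$ are strongly cospectral. Write $G_i,G_j,G_k$ for the components of $G\setminus v$ containing $i,j,k$, set $n_\ell := |V(G_\ell)|$, and let $f := \alpha_i^{G_i}$, $g := \alpha_j^{G_j}$, $h := \alpha_k^{G_k}$. The other components of $G\setminus v$ contribute identically to the numerator and denominator of $\alpha_i^{G\setminus v}$, so $\alpha_i^{G\setminus v} = f$, and similarly for $g$ and $h$. Lemma~\ref{lem:restriction} applied in all three cyclic orderings then gives: at any zero of $f$ one has $g = h \neq 0$, and symmetrically. In particular the zero sets $Z(f),Z(g),Z(h)$ are pairwise disjoint and, by Lemma~\ref{lemma:derivative_quotient}, consist entirely of simple points.

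The key device is the rational function
\[
\Psi(t) := (f-g)(g-h)(h-f).
\]
By the three conditions above, $\Psi$ vanishes at every point of $Z(f)\cup Z(g)\cup Z(h)$, yielding at least $|Z(f)|+|Z(g)|+|Z(h)|$ distinct simple zeros. Also $\Psi\not\equiv 0$: an identity $f\equiv g$ would, at a zero of $g$, force $f=0=g$, contradicting the lemma (which requires $f\ne 0$ there). On the other hand, each of $f,g,h$ is a monic rational function with numerator one degree higher than denominator, so $f,g,h\sim t$ at infinity, every pairwise difference decays as $O(1/t)$, and $\Psi=O(1/t^3)$. Since the denominator $(\phi^{G_i\setminus i}\phi^{G_j\setminus j}\phi^{G_k\setminus k})^2$ has degree $2(n_i+n_j+n_k)-6$, the polynomial numerator of $\Psi$ has degree at most $2(n_i+n_j+n_k)-9$.

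Comparing gives
\[
|Z(f)|+|Z(g)|+|Z(h)|\;\le\;2(n_i+n_j+n_k)-9,
\]
which already forces $n_i+n_j+n_k\ge 9$ in the generic situation where each $\alpha_\ell^{G_\ell}$ has $n_\ell$ distinct zeros. The main obstacle will be to extract a strict contradiction from this bound. My plan is to iterate the degree analysis at infinity: using that strong cospectrality in $G$ forces coincidences among the higher-order coefficients of the $1/t$-expansions of $f,g,h$ (reflecting matching closed-walk counts around $i,j,k$ in the respective components, once propagated through the cut vertex $v$), I expect $\Psi$ to decay strictly faster than $1/t^3$, driving the degree upper bound below the zero-count lower bound. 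A careful bookkeeping of how the strong cospectrality of $i,j,k$ in $G$ transfers -- through the formula $\alpha_v^{G\setminus i}=f\mu$ with $\mu=\phi^{G\setminus i}/\phi^{G\setminus v}$ -- to an equivalent property for $\beta,\gamma,\delta:=\alpha_v^{G\setminus i},\alpha_v^{G\setminus j},\alpha_v^{G\setminus k}$ will be needed to make this quantitative, and is where I expect the bulk of the technical work to lie.
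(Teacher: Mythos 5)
Your argument stops before reaching a contradiction, and the step you defer is the whole difficulty. The counting you set up is internally consistent: $\Psi=(f-g)(g-h)(h-f)$ does vanish at every point of $Z(f)\cup Z(g)\cup Z(h)$, these sets are pairwise disjoint by Lemma~\ref{lem:restriction}, $\Psi\not\equiv 0$, and the decay $\Psi=O(1/t^3)$ bounds the numerator degree by $2(n_i+n_j+n_k)-9$. But since $|Z(f)|+|Z(g)|+|Z(h)|\le n_i+n_j+n_k$, the resulting inequality is vacuous as soon as $n_i+n_j+n_k\ge 9$; it only excludes very small graphs. To close the gap this way you would need $\Psi$ to decay roughly like $t^{-(n_i+n_j+n_k)+O(1)}$, i.e.\ the Laurent expansions of $f,g,h$ at infinity would have to agree to an order comparable to the sizes of the components. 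That is not a consequence of strong cospectrality: cospectrality of $i$ and $j$ in $G$ controls closed walks at $i$ and $j$ \emph{in $G$}, which from some length onward pass through $v$ into the other components, so it says essentially nothing about the high-order coefficients of $\alpha_i^{G_i}$ versus $\alpha_j^{G_j}$ (already the $1/t$ coefficients are $-\deg_{G_i}(i)$ and $-\deg_{G_j}(j)$, which need not coincide, since equality of degrees in $G$ does not determine adjacency to $v$). So the proposed refinement is unlikely to be realizable, and in any case it is not carried out.

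The paper derives the contradiction from Lemma~\ref{lem:restriction} by a purely qualitative sign argument rather than by degree counting. By Lemma~\ref{lemma:derivative_quotient} each of $\alpha_i^{G\setminus v},\alpha_j^{G\setminus v},\alpha_k^{G\setminus v}$ is increasing and surjective on each branch with only simple zeros and poles; all three are negative for $t\ll 0$ and positive for $t\gg 0$. Lemma~\ref{lem:restriction} forbids two of them from vanishing simultaneously, and at a zero of one the other two must be \emph{equal} and nonzero. Tracking the signs between the smallest and the largest zero of the three functions, one finds a point where one function vanishes while the other two have opposite signs, contradicting that forced equality. You might try to recover this by studying the sign of your $\Psi$, but at that point you would be reproving the intermediate-value argument; I would recommend abandoning the degree count and arguing via signs directly.
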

\begin{proof}
Assume, by contradiction, that the vertices $i$, $j$ and $k$ are pairwise strongly cospectral.

First, note by the Lemma~\ref{lemma:derivative_quotient} that if $\theta$ is a sufficiently large negative number, then $\alpha_i^{G\setminus v}(\theta)$, $\alpha_j^{G\setminus v}(\theta)$ and $\alpha_k^{G\setminus v}(\theta)$ are all negative. On the other hand, if $\theta$ is a sufficiently large positive number, then $\alpha_i^{G\setminus v}(\theta)$, $\alpha_j^{G\setminus v}(\theta)$ and $\alpha_k^{G\setminus v}(\theta)$ are all positive. Let $\tau$ be the smallest real number so that at least one of them is equal to zero, and $\lambda$ the largest real number so that at least one is equal to zero.

Observe also by Lemma~\ref{lemma:derivative_quotient} that $\alpha_i^{G\setminus v}$, $\alpha_j^{G\setminus v}$ and $\alpha_k^{G\setminus v}$ are increasing and continuous in each branch. Finally, note that by Lemma~\ref{lem:restriction} it cannot happen that two terms among $\alpha_i^{G\setminus v}(\theta)$, $\alpha_j^{G\setminus v}(\theta)$ and $\alpha_k^{G\setminus v}(\theta)$ are simultaneously equal to $0$.

Thus, it must be that between $\tau$ and $\lambda$ there is at least one real number $\theta$ for which among $\alpha_i^{G\setminus v}(\theta)$, $\alpha_j^{G\setminus v}(\theta)$ and $\alpha_k^{G\setminus v}(\theta)$ there is a negative number, a positive number and a $0$. But this contradicts Lemma~\ref{lem:restriction}. 

So the vertices $i$, $j$ and $k$ cannot be pairwise strongly cospectral.
\end{proof}

This leads to the promised result.

\begin{corollary}\label{main_result_trees} There is no tree with three pairwise strongly cospectral vertices.
\end{corollary}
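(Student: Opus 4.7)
The plan is to derive the corollary almost immediately from Theorem~\ref{main_result} together with Lemma~\ref{lem:notinpath}, using only an elementary structural fact about trees.

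First I would argue by contradiction: suppose a tree $T$ contains three pairwise strongly cospectral vertices $i$, $j$, $k$. Since strong cospectrality implies ordinary cospectrality (the condition $E_r e_i = \pm E_r e_j$ for every $r$ forces $(E_r)_{i,i} = (E_r)_{j,j}$, which is the characterization of cospectrality recalled in Section~\ref{sec:prelim}), the three vertices are pairwise cospectral. Lemma~\ref{lem:notinpath} then guarantees that $i$, $j$, $k$ do not all lie on a single path of $T$.

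Next I would produce the cut-vertex required to invoke Theorem~\ref{main_result}. The key elementary fact about trees is that three vertices not lying on a common path always admit a unique ``median'' vertex $v$ whose deletion places them in three distinct components. To see this, let $v$ be the last vertex of the path $P_{ij}$ from $i$ to $j$ that still lies on the path $P_{ik}$ from $i$ to $k$; since $k \notin P_{ij}$, this branching vertex $v$ lies strictly in the interior of $P_{ij}$, and by the uniqueness of paths in a tree the vertex $v$ must also lie on $P_{jk}$. Removing $v$ from $T$ therefore yields a forest in which $i$, $j$, $k$ belong to three different components.

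Finally, with $G = T$ and the cut-vertex $v$ in hand, Theorem~\ref{main_result} applies verbatim and concludes that at least one of the pairs among $\{i,j\}$, $\{i,k\}$, $\{j,k\}$ fails to be strongly cospectral, contradicting the initial hypothesis. All of the substantial work has already been absorbed into Lemma~\ref{lem:restriction} and Theorem~\ref{main_result}; at the level of the corollary the only additional ingredient is the short tree-geometric argument above, so no real obstacle is expected here.
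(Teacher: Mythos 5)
Your proposal is correct and follows exactly the same route as the paper: Lemma~\ref{lem:notinpath} rules out the collinear case, and Theorem~\ref{main_result} handles the remaining case via the separating vertex. The only difference is that you spell out the existence of the median (branch) vertex of three non-collinear vertices in a tree, a standard fact the paper leaves implicit, and your argument for it is sound.
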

\begin{proof}
	By Lemma~\ref{lem:notinpath}, if these vertices exist, then it must be so that there is another vertex of the tree whose removal puts each in a different component. By Theorem~\ref{main_result}, this is not possible.
\end{proof}

It is possible to give a statement analogous to Theorem~\ref{main_result} for matching polynomials, but in this case the definition of strong cospectrality should be given by the analogous statement for matching polynomials of Theorem~\ref{thm:strcospec}. The proof in this case will follow similarly, with the exception that Lemmas~\ref{lem:derivative} and~\ref{lem:wronskian} are replaced by~\cite[Theorem 1.1]{GodsilAlgebraicCombinatorics} and~\cite[Lemma 4.1]{GodsilAlgebraicCombinatorics}, respectively.

In the next section, we work on the proof of Lemma~ \ref{lem:restriction}.

\section{Proof of the key lemma} \label{sec:lemma}

\subsection{Properties of the $\alpha$'s}

We start this section writing the definition of strongly cospectral vertices in terms of the $\alpha$'s defined in \eqref{eq:alpha}.

\begin{lemma}\label{lemma:strongly_cospectral_cf} Let $i$ and $j$ be distinct vertices in the graph $G$. Then, $i$ and $j$ are strongly cospectral if, and only if, $\alpha_i^G=\alpha_j^{G}$ and $\alpha_i^G(\theta)=\alpha_j^{G}(\theta)\neq 0$ whenever $\alpha_i^{G\setminus j}(\theta)=0$ or $\alpha_j^{G\setminus i}(\theta)=0$.
\end{lemma}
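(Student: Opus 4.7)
The plan is to apply Theorem~\ref{thm:strcospec} directly, translating both of its two conditions into statements about the rational functions $\alpha$.

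The cospectrality condition $\phi^{G\setminus i}=\phi^{G\setminus j}$ is clearly equivalent to $\alpha_i^G=\alpha_j^G$, since both of these $\alpha$'s share the same numerator $\phi^G$. A useful consequence of this equality, which I would record immediately, is that $\alpha_j^{G\setminus i}$ and $\alpha_i^{G\setminus j}$ also agree as rational functions, since both are equal to $\phi^{G\setminus i}/\phi^{G\setminus\{i,j\}}=\phi^{G\setminus j}/\phi^{G\setminus\{i,j\}}$. Consequently, the two conditions ``$\alpha_i^{G\setminus j}(\theta)=0$'' and ``$\alpha_j^{G\setminus i}(\theta)=0$'' appearing in the hypothesis pick out exactly the same set of real numbers $\theta$.

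For the second condition of Theorem~\ref{thm:strcospec}, the key identity is the factorization
\[
\frac{\phi^G}{\phi^{G\setminus\{i,j\}}}=\frac{\phi^G}{\phi^{G\setminus i}}\cdot\frac{\phi^{G\setminus i}}{\phi^{G\setminus\{i,j\}}}=\alpha_i^G\cdot\alpha_j^{G\setminus i}.
\]
Requiring all poles of $\phi^{G\setminus\{i,j\}}/\phi^G$ to be simple is the same as asking that this product, viewed as a rational function, has no zero of order $\geq 2$. Now Lemma~\ref{lemma:derivative_quotient} applies to each factor and guarantees that each has only simple zeros and simple poles; so by inspecting the local order at an arbitrary point, a zero of order $\geq 2$ of the product can occur only at a common zero of $\alpha_i^G$ and $\alpha_j^{G\setminus i}$. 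In every other local configuration (only one factor vanishing with the other regular nonzero; a pole of one factor canceling a zero of the other; or neither factor vanishing) the resulting order is $\leq 1$. Therefore the pole-simplicity condition is equivalent to $\alpha_i^G$ and $\alpha_j^{G\setminus i}$ having no common zero, i.e., whenever $\alpha_j^{G\setminus i}(\theta)=0$, we have $\alpha_i^G(\theta)\neq 0$.

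Combining these two equivalences finishes the proof: $\alpha_i^G=\alpha_j^G$ also upgrades ``$\alpha_i^G(\theta)\neq 0$'' to ``$\alpha_i^G(\theta)=\alpha_j^G(\theta)\neq 0$'' automatically, and the hypothesis stated with ``$\alpha_i^{G\setminus j}(\theta)=0$ or $\alpha_j^{G\setminus i}(\theta)=0$'' is the same condition written twice. I do not anticipate a serious obstacle; the argument is essentially a careful rewriting of Theorem~\ref{thm:strcospec} in the $\alpha$-language, and the only delicate point is the order accounting at shared zeros and poles of $\alpha_i^G$ and $\alpha_j^{G\setminus i}$, which is fully controlled by Lemma~\ref{lemma:derivative_quotient}.
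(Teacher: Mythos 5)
Your proof is correct and follows essentially the same route as the paper: both translate Theorem~\ref{thm:strcospec} via the factorization $\phi^G/\phi^{G\setminus\{i,j\}}=\alpha_i^G\,\alpha_j^{G\setminus i}$ and use the simplicity of the zeros and poles of the $\alpha$'s from Lemma~\ref{lemma:derivative_quotient} to locate where a multiple pole could arise. Your explicit remark that cospectrality forces $\alpha_i^{G\setminus j}=\alpha_j^{G\setminus i}$, so that the two clauses of the ``or'' in the statement coincide, is a small point the paper leaves implicit.
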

\begin{proof} Our proof proceeds via Theorem~\ref{thm:strcospec}. Observe that $\phi^{G\setminus i}=\phi^{G\setminus j}$ if, and only if, $\alpha_i^G=\alpha_j^{G}$. We claim that $\dfrac{\phi^{G\setminus \{i,j\}}}{\phi^{G}}$ has a double pole at $\theta$ if, and only if, $\alpha_i^G(\theta)$, $\alpha_j^{G}(\theta)$, $\alpha_i^{G\setminus j}(\theta)$, $\alpha_j^{G\setminus i}(\theta)$ are all equal to zero. In order to see this, simply notice that,
\[\dfrac{\phi^{G\setminus \{i,j\}}}{\phi^{G}} = \dfrac{1}{\alpha_i^G\alpha_j^{G\setminus i}}=\dfrac{1}{\alpha_j^{G}\alpha_i^{G\setminus j}},\]

\noindent and $\alpha_i^G$, $\alpha_j^{G}$, $\alpha_i^{G\setminus j}$ and $\alpha_j^{G\setminus i}$ have simple zeros by the Lemma~\ref{lemma:derivative_quotient}. 
\end{proof}

In what follows in order to work with the rational functions $\alpha_i^G$ for vertex deleted subgraphs we make use of a technique called \textit{contraction}. This technique is inspired by the theory of continued fractions and has also been used in the context of matching polynomials~\cite{spier2020refined}. 

For distinct vertices $i$ and $j$ in a graph $G$ denote by, 
\[\lambda_{i j}^G := -\left(\dfrac{\sum_{P:i\to j}\phi^{G\setminus P}}{\phi^{G\setminus \{i,j\}}}\right)^2.\]

From here onwards, if $q(x)$ is a rational function, then if write $q(\theta) = \infty$ we simply mean that $\theta$ is a pole of $q(x)$.

Observe that $\lambda_{i j}^G=\lambda_{j i}^G$, and that either $\lambda_{i j}^G(\theta) = \infty$ or $\lambda_{i j}^G(\theta)<0$, for every real number $\theta$. Note also that $\lambda_{i j}^G$ has double zeros and poles. It is an immediate consequence of Lemma~\ref{lem:wronskian} that,

\begin{equation}
	\alpha_i^G=\alpha_i^{G\setminus j}+\dfrac{\lambda_{i j}^G}{\alpha_j^{G\setminus i}}\text{  and } \alpha_j^{G}=\alpha_j^{G\setminus i}+\dfrac{\lambda_{i j}^G}{\alpha_i^{G\setminus j}}. \label{eq:alphaslambdas}
\end{equation}

\noindent In this case we have the following useful observation.

\begin{lemma}\label{lambda_zero} If $\lambda_{i j}^G(\theta)=0$, then $\alpha_i^G(\theta)=\alpha_i^{G\setminus j}(\theta)$. 
\end{lemma}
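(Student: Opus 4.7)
The plan is to read the conclusion directly off the identity
\[
\alpha_i^G - \alpha_i^{G\setminus j} \;=\; \frac{\lambda_{ij}^G}{\alpha_j^{G\setminus i}}
\]
provided by \eqref{eq:alphaslambdas}: the entire task reduces to showing that, as a rational function in $t$, the right-hand side vanishes at every $\theta$ for which $\lambda_{ij}^G(\theta) = 0$.

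The crux is a simple comparison of multiplicities. By its very definition, $\lambda_{ij}^G$ equals $-1$ times the square of a rational function, so any zero of $\lambda_{ij}^G$ has even order, in particular order at least $2$. On the other hand, Lemma~\ref{lemma:derivative_quotient}, applied to $G\setminus i$, guarantees that $\alpha_j^{G\setminus i}$ has only simple zeros and simple poles. Therefore at a point $\theta$ with $\lambda_{ij}^G(\theta)=0$ the order of vanishing of $\lambda_{ij}^G/\alpha_j^{G\setminus i}$ at $\theta$ is at least $2-1 = 1$ in the worst case that $\alpha_j^{G\setminus i}$ has a simple zero at $\theta$, and strictly larger in the other cases (where $\alpha_j^{G\setminus i}(\theta)$ is finite and nonzero, or where $\alpha_j^{G\setminus i}$ has a simple pole at $\theta$, the ratio picks up an extra factor). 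Either way, the ratio vanishes at $\theta$, so $\alpha_i^G(\theta) = \alpha_i^{G\setminus j}(\theta)$.

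The only point that deserves any care — and the only place I can imagine an obstacle — is the interpretation of the equality when $\theta$ happens to be a pole of $\alpha_i^{G\setminus j}$. Under the convention stated just before the lemma, writing $q(\theta) = \infty$ means $\theta$ is a pole of $q$; and the argument above shows that the rational function $\alpha_i^G - \alpha_i^{G\setminus j}$ extends analytically through $\theta$ with value $0$, which forces either both of $\alpha_i^G$ and $\alpha_i^{G\setminus j}$ to be finite and equal at $\theta$, or both to be poles with matching principal parts. In both situations the stated equality holds. Overall, I expect the proof to be a one-line multiplicity calculation built on top of \eqref{eq:alphaslambdas} and Lemma~\ref{lemma:derivative_quotient}, with no serious technical hurdle.
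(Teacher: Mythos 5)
Your proof is correct and is essentially the paper's own argument: both rest on the identity \eqref{eq:alphaslambdas} together with the observation that $\lambda_{ij}^G$ has double zeros while $\alpha_j^{G\setminus i}$ has only simple zeros and poles (Lemma~\ref{lemma:derivative_quotient}), so the correction term $\lambda_{ij}^G/\alpha_j^{G\setminus i}$ vanishes at $\theta$. Your extra remark about the case where $\theta$ is a pole of $\alpha_i^{G\setminus j}$ is a harmless refinement of the same reasoning.
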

\begin{proof} Note that $\lambda_{i j}^G$ has double zeros, while, by Lemma~\ref{lemma:derivative_quotient}, $\alpha_j^{G\setminus i}$ has simple zeros. It follows that if $\lambda_{i j}^G(\theta)=0$, then $\dfrac{\lambda_{i j}^G}{\alpha_j^{G\setminus i}}(\theta)=0$, which in turn implies $\alpha_i^G(\theta)=\alpha_i^{G\setminus j}(\theta)$.
\end{proof}

Our next result shows that, more generally, given $\alpha_j^{G\setminus i}(\theta)$, $\alpha_i^{G\setminus j}(\theta)$ and $\lambda_{i j}^G(\theta)\neq \infty$, we can compute $\alpha_i^G(\theta)$ and $\alpha_j^{G}(\theta)$. In the statement of the next result we use the following conventions. For every $C$ in $\Rds \cup\{\infty\}$, $\frac{0}{C}=0$ and $\infty+C=C+\infty=\infty$; if $C\neq \infty$, then $\frac{C}{\infty}=0$; if $C\notin \{0,\infty\}$, then $\frac{C}{0}=\infty$.

\begin{lemma}\label{lambda_finite} If $\lambda_{i j}^G(\theta)\neq \infty$, then, assuming the above conventions,
\[\alpha_i^G(\theta)=\alpha_i^{G\setminus j}(\theta)+\dfrac{\lambda_{i j}^G(\theta)}{\alpha_j^{G\setminus i}(\theta)}.\]
\end{lemma}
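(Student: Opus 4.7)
My plan is to start from the observation that the claimed identity
\[\alpha_i^G = \alpha_i^{G\setminus j} + \frac{\lambda_{ij}^G}{\alpha_j^{G\setminus i}}\]
is already \eqref{eq:alphaslambdas}, interpreted as an equality of rational functions (it can also be rederived directly by clearing denominators and invoking Lemma~\ref{lem:wronskian}). The content of this lemma is therefore only to verify that pointwise evaluation at $\theta$ respects this identity under the stated arithmetic conventions whenever $\lambda_{ij}^G(\theta)\neq\infty$. I will handle this by a case analysis on the value of $\alpha_j^{G\setminus i}(\theta)$.

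Two multiplicity facts will be used throughout: first, $\lambda_{ij}^G$ is minus a square, so each of its zeros and poles has even multiplicity; second, by Lemma~\ref{lemma:derivative_quotient}, both $\alpha_j^{G\setminus i}$ and $\alpha_i^{G\setminus j}$ have only simple zeros and simple poles. These two facts will pin down the orders of vanishing at $\theta$ in each case.

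In the generic case $\alpha_j^{G\setminus i}(\theta)\in\Rds\setminus\{0\}$ with $\alpha_i^{G\setminus j}(\theta)$ finite, both sides evaluate as real numbers and the rational-function identity yields equality immediately. When $\alpha_j^{G\setminus i}(\theta)=0$, I will split on $\lambda_{ij}^G(\theta)$: the subcase $\lambda_{ij}^G(\theta)=0$ reduces directly to Lemma~\ref{lambda_zero} combined with the convention $0/C=0$; the subcase $\lambda_{ij}^G(\theta)\neq 0$ gives that $\lambda_{ij}^G/\alpha_j^{G\setminus i}$ has a simple pole at $\theta$ (by the simplicity of the zero of $\alpha_j^{G\setminus i}$ and the non-vanishing of the finite value $\lambda_{ij}^G(\theta)$), and the rational identity then forces $\alpha_i^G(\theta)=\infty$, matching the right-hand side under the conventions $C/0=\infty$ and $C+\infty=\infty$. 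When $\alpha_j^{G\setminus i}(\theta)=\infty$, the convention $C/\infty=0$ reduces the right-hand side to $\alpha_i^{G\setminus j}(\theta)$, and I will verify that the left-hand side agrees by translating into orders of vanishing of $\phi^G,\phi^{G\setminus i},\phi^{G\setminus j},\phi^{G\setminus\{i,j\}}$ at $\theta$ and invoking Lemma~\ref{lem:wronskian}.

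The hardest part is expected to be the last case above, together with the degenerate subcase of $\alpha_j^{G\setminus i}(\theta)=0$ in which $\alpha_i^{G\setminus j}(\theta)$ is also $\infty$: in both, several indeterminate forms collide and must be disentangled by a careful local analysis. The hypothesis $\lambda_{ij}^G(\theta)\neq\infty$ is precisely what constrains the order of vanishing of $\sum_{P:i\to j}\phi^{G\setminus P}$ at $\theta$, and through the Wronskian identity this pins down the orders of the remaining terms and ensures that the conventions really are consistent with the rational identity.
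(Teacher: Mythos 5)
Your reduction to pointwise evaluation of \eqref{eq:alphaslambdas} and your case split on $\alpha_j^{G\setminus i}(\theta)$ mirror the paper's proof, and every case except the one you flag as hardest goes through as you describe. The gap is in that degenerate case, $\alpha_j^{G\setminus i}(\theta)=0$ and $\alpha_i^{G\setminus j}(\theta)=\infty$ with $\lambda_{ij}^G(\theta)\in(-\infty,0)$: the tools you list do not close it. Write $m=\mathrm{ord}_\theta\,\phi^{G\setminus\{i,j\}}$. Simplicity of the zero of $\alpha_j^{G\setminus i}$ and of the pole of $\alpha_i^{G\setminus j}$, together with $\lambda_{ij}^G(\theta)\notin\{0,\infty\}$, give $\mathrm{ord}_\theta\,\phi^{G\setminus i}=m+1$, $\mathrm{ord}_\theta\,\phi^{G\setminus j}=m-1$ and $\mathrm{ord}_\theta\sum_{P}\phi^{G\setminus P}=m$. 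Lemma~\ref{lem:wronskian} then exhibits $\phi^{G\setminus\{i,j\}}\phi^G$ as the difference of two functions each of order exactly $2m$ at $\theta$, so it only yields $\mathrm{ord}_\theta\,\phi^G\geq m$: the leading terms could a priori cancel, in which case $\alpha_i^G$ would be finite at $\theta$ while the right-hand side is $\infty+\infty=\infty$. Equivalently, in \eqref{eq:alphaslambdas} you are adding two simple poles and must rule out cancellation; order counting with the Wronskian alone cannot do that, so the claim that it ``pins down the orders of the remaining terms'' is not justified.

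The missing ingredient is a sign argument, and it is exactly what the paper supplies. By Lemma~\ref{lemma:derivative_quotient} the $\alpha$'s are increasing on each branch, so for small $\varepsilon>0$ one has $\alpha_i^{G\setminus j}(\theta-\varepsilon)>0>\alpha_i^{G\setminus j}(\theta+\varepsilon)$ and $\alpha_j^{G\setminus i}(\theta-\varepsilon)<0<\alpha_j^{G\setminus i}(\theta+\varepsilon)$; since $\lambda_{ij}^G<0$ near $\theta$, both summands of $\alpha_i^{G\setminus j}+\lambda_{ij}^G/\alpha_j^{G\setminus i}$ are positive just left of $\theta$ and negative just right of it, so the two poles reinforce rather than cancel, and an increasing-on-branches function that changes sign from $+$ to $-$ across $\theta$ must have a pole there. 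If you prefer to stay within your order-counting framework, there is an alternative patch: if $\mathrm{ord}_\theta\,\phi^G\geq m+1$, then $\alpha_j^{G}=\phi^{G}/\phi^{G\setminus j}$ would have a zero of order at least $2$ at $\theta$, contradicting the simplicity of zeros of $\alpha_j^{G}$ from Lemma~\ref{lemma:derivative_quotient}; hence $\mathrm{ord}_\theta\,\phi^G=m$ and $\alpha_i^G$ has a (simple) pole at $\theta$. Either way, you must invoke more than the simplicity of zeros and poles of $\alpha_j^{G\setminus i}$ and $\alpha_i^{G\setminus j}$, which is all your outline commits to.
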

\begin{proof} If $\lambda_{i j}^G(\theta)=0$, then, by Lemma~\ref{lambda_zero}, it holds that $\alpha_i^G(\theta)=\alpha_i^{G\setminus j}(\theta)$, and because the zeros of the $\lambda$'s are double and the zeros of the $\alpha$'s are simple, the equality follows.

 Therefore, assume that $\lambda_{i j}^G(\theta)$ is in $(-\infty,0)$. In this case, the result follows immediately, except when $\alpha_i^{G\setminus j}(\theta)=\infty$ and $\alpha_j^{G\setminus i}(\theta)=0$. 

Assume we are in this last situation, then we claim that $\alpha_i^G(\theta)=\infty$. To see this, observe that, by Lemma~\ref{lemma:derivative_quotient}, for every $\varepsilon>0$ sufficiently small it holds that $\alpha_i^{G\setminus j}(\theta-\varepsilon)>0>\alpha_i^{G\setminus j}(\theta+\varepsilon)$ and $\alpha_j^{G\setminus i}(\theta-\epsilon)<0<\alpha_j^{G\setminus i}(\theta+\varepsilon)$.

Then, since $\alpha_i^G=\alpha_i^{G\setminus j}+ \lambda_{i j}^G/\alpha_j^{G\setminus i}$ and $\lambda_{i j}^G$ is negative, we obtain $\alpha_i^G(\theta-\varepsilon)>0>\alpha_i^G(\theta+\varepsilon)$ for every $\epsilon>0$ sufficiently small. It follows by Lemma~\ref{lemma:derivative_quotient} that $\alpha_i^G(\theta)=\infty$.
\end{proof}

In case $\lambda_{i j}^G(\theta)=\infty$ we can still obtain some information.

\begin{lemma}\label{lambda_infinite} If $\lambda_{i j}^G(\theta) = \infty$, then $\alpha_i^{G\setminus j}(\theta)=\alpha_j^{G\setminus i}(\theta)=\infty$. If, in addition, $\alpha_j^{G}(\theta)=\infty$, then $\alpha_i^G(\theta)=\infty$.
\end{lemma}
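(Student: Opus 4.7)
The plan is to translate the claim into a statement about orders of vanishing at $\theta$. Rearranging \eqref{eq:alphaslambdas} gives the identity
\[ \lambda_{i j}^G = \alpha_j^{G\setminus i}\bigl(\alpha_i^G - \alpha_i^{G\setminus j}\bigr) = \alpha_i^{G\setminus j}\bigl(\alpha_j^G - \alpha_j^{G\setminus i}\bigr), \]
and this identity, together with Lemma~\ref{lemma:derivative_quotient}, drives the whole argument.

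For the first claim, I would work with the order of vanishing $\operatorname{ord}_\theta$ at $\theta$. By Lemma~\ref{lemma:derivative_quotient} each of $\alpha_i^G$, $\alpha_j^G$, $\alpha_i^{G\setminus j}$, $\alpha_j^{G\setminus i}$ has $\operatorname{ord}_\theta \geq -1$, so $\alpha_i^G - \alpha_i^{G\setminus j}$ does as well. Combining this with the identity above and the subadditivity of orders,
\[ \operatorname{ord}_\theta(\lambda_{i j}^G) \;\geq\; \operatorname{ord}_\theta(\alpha_j^{G\setminus i}) + \operatorname{ord}_\theta\bigl(\alpha_i^G - \alpha_i^{G\setminus j}\bigr) \;\geq\; -2. \]
On the other hand, $\lambda_{i j}^G$ is the negative of the square of a rational function, so its order at every real point is even, and $\lambda_{i j}^G(\theta) = \infty$ forces $\operatorname{ord}_\theta(\lambda_{i j}^G) \leq -2$. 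The two bounds pin $\operatorname{ord}_\theta(\lambda_{i j}^G) = -2$ and force equality throughout, so $\operatorname{ord}_\theta(\alpha_j^{G\setminus i}) = -1$; that is, $\alpha_j^{G\setminus i}(\theta) = \infty$. Running the same argument via the second form of the identity yields $\alpha_i^{G\setminus j}(\theta) = \infty$.

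For the second claim, the previous step pins down the leading Laurent coefficients. Near $\theta$ I would write
\[ \alpha_j^{G\setminus i}(t) = \tfrac{A}{t-\theta} + O(1), \quad \alpha_i^{G\setminus j}(t) = \tfrac{B}{t-\theta} + O(1), \quad \lambda_{i j}^G(t) = \tfrac{C}{(t-\theta)^2} + O\!\left(\tfrac{1}{t-\theta}\right), \]
with $A, B, C$ all nonzero by the first part. Substituting into $\alpha_j^G = \alpha_j^{G\setminus i} + \lambda_{i j}^G/\alpha_i^{G\setminus j}$ and retaining only the leading term gives $\alpha_j^G(t) = \tfrac{AB+C}{B(t-\theta)} + O(1)$, and the symmetric computation gives $\alpha_i^G(t) = \tfrac{AB+C}{A(t-\theta)} + O(1)$. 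Since $A,B \neq 0$, both leading coefficients are nonzero simultaneously (iff $AB+C \neq 0$), so $\alpha_j^G(\theta) = \infty$ forces $\alpha_i^G(\theta) = \infty$.

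The main obstacle is really the first observation: noticing that the identity derived from \eqref{eq:alphaslambdas}, combined with the evenness of $\operatorname{ord}_\theta(\lambda_{i j}^G)$ and the interlacing-type bound $\operatorname{ord}_\theta \geq -1$ on each $\alpha$, is already enough to force simple poles on both companion $\alpha$'s. Once that is in place, the second part collapses to the short leading-order computation above.
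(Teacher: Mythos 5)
Your proof is correct. The first half is essentially the paper's argument: you use the same rearrangement $\lambda_{ij}^G=\alpha_j^{G\setminus i}(\alpha_i^G-\alpha_i^{G\setminus j})=\alpha_i^{G\setminus j}(\alpha_j^G-\alpha_j^{G\setminus i})$ of \eqref{eq:alphaslambdas} and the same counting of double poles of $\lambda_{ij}^G$ against simple poles of the $\alpha$'s, merely phrased in terms of $\operatorname{ord}_\theta$. (Minor quibble: what you call ``subadditivity'' is exact additivity of orders under multiplication, but either direction of the inequality suffices here since each factor has order at least $-1$.) The second half is where you genuinely diverge: the paper invokes the multiplicative identity $\alpha_i^G\alpha_j^{G\setminus i}=\phi^G/\phi^{G\setminus\{i,j\}}=\alpha_j^G\alpha_i^{G\setminus j}$ and argues that a double pole of the right-hand side cannot be absorbed by the simple pole of $\alpha_j^{G\setminus i}$ alone, whereas you stay with the additive contraction formula and compute leading Laurent coefficients, obtaining $\alpha_j^G\sim\frac{AB+C}{B(t-\theta)}$ and $\alpha_i^G\sim\frac{AB+C}{A(t-\theta)}$. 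Your computation is slightly longer but buys a bit more: it shows that under the hypothesis $\lambda_{ij}^G(\theta)=\infty$, the conditions $\alpha_i^G(\theta)=\infty$ and $\alpha_j^G(\theta)=\infty$ are \emph{equivalent} (both reduce to $AB+C\neq 0$), and it exhibits the exact residues, neither of which the paper's phrasing makes explicit. Both routes rest on the same two facts already established (simple poles of the $\alpha$'s from Lemma~\ref{lemma:derivative_quotient}, double pole of $\lambda_{ij}^G$), so there is no gap.
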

\begin{proof} First, by \eqref{eq:alphaslambdas}, we have
\begin{comment}
	\begin{align*}
	\lambda_{i j}^G & = -\left(\dfrac{\sum_{P:i\to j}\phi^{G\setminus P}}{\phi^{G\setminus \{i,j\}}}\right)^2=-\dfrac{\phi^{G\setminus i}\phi^{G\setminus j}-\phi^{G\setminus \{i,j\}}\phi^{G}}{(\phi^{G\setminus \{i,j\}})^2} \\
	& = -\dfrac{\phi^{G\setminus i}}{\phi^{G\setminus \{i,j\}}}\dfrac{\phi^{G\setminus j}}{\phi^{G\setminus \{i,j\}}}+\dfrac{\phi^{G}}{\phi^{G\setminus \{i,j\}}},
\end{align*}
which implies that
\end{comment}
\[\lambda_{i j}^G=\alpha_i^{G\setminus j}(-\alpha_j^{G\setminus i}+\alpha_i^G)=\alpha_j^{G\setminus i}(-\alpha_i^{G\setminus j}+\alpha_j^{G}).\]

\noindent Now, notice that $\lambda_{i j}^G$ has double poles, while $\alpha_i^G$, $\alpha_j^{G}$, $\alpha_i^{G\setminus j}$ and $\alpha_i^{G\setminus j}$ have simple poles by the Lemma~\ref{lemma:derivative_quotient}. Thus, due to these last expressions for $\lambda_{i j}^G$, it follows that $\lambda_{i j}^G(\theta)=\infty$ can only happen if $\alpha_i^{G \setminus j}( \theta)=\alpha_j^{G\setminus i}(\theta)=\infty$. This proves the first part of the statement.

For the second part of the statement, observe that,
\[\alpha_i^G\alpha_j^{G\setminus i}=\dfrac{\phi^{G}}{\phi^{G\setminus \{i,j\}}}=\alpha_j^{G}\alpha_i^{G\setminus j}.\]

\noindent It follows that if, in addition, $\alpha_j^{G}(\theta)=\infty$, then $\alpha_i^G\alpha_j^{G\setminus i}$ has a double pole at $\theta$, which implies that $\alpha_i^G(\theta)=\infty$, proving the second part of the statement.
\end{proof}

\subsection{The proof of Lemma \ref{lem:restriction}}

\textit{
In this subsection, we assume the graph $G$ has the property that there exists a vertex $v$ such that the vertices $i$, $j$ and $k$ are in distinct connected components of $G\setminus v$.} 

On Figure~\ref{fig_graph} there is a representation of a graph with this property.

\begin{figure}[h]
\begin{center}	\includegraphics[scale=0.2]{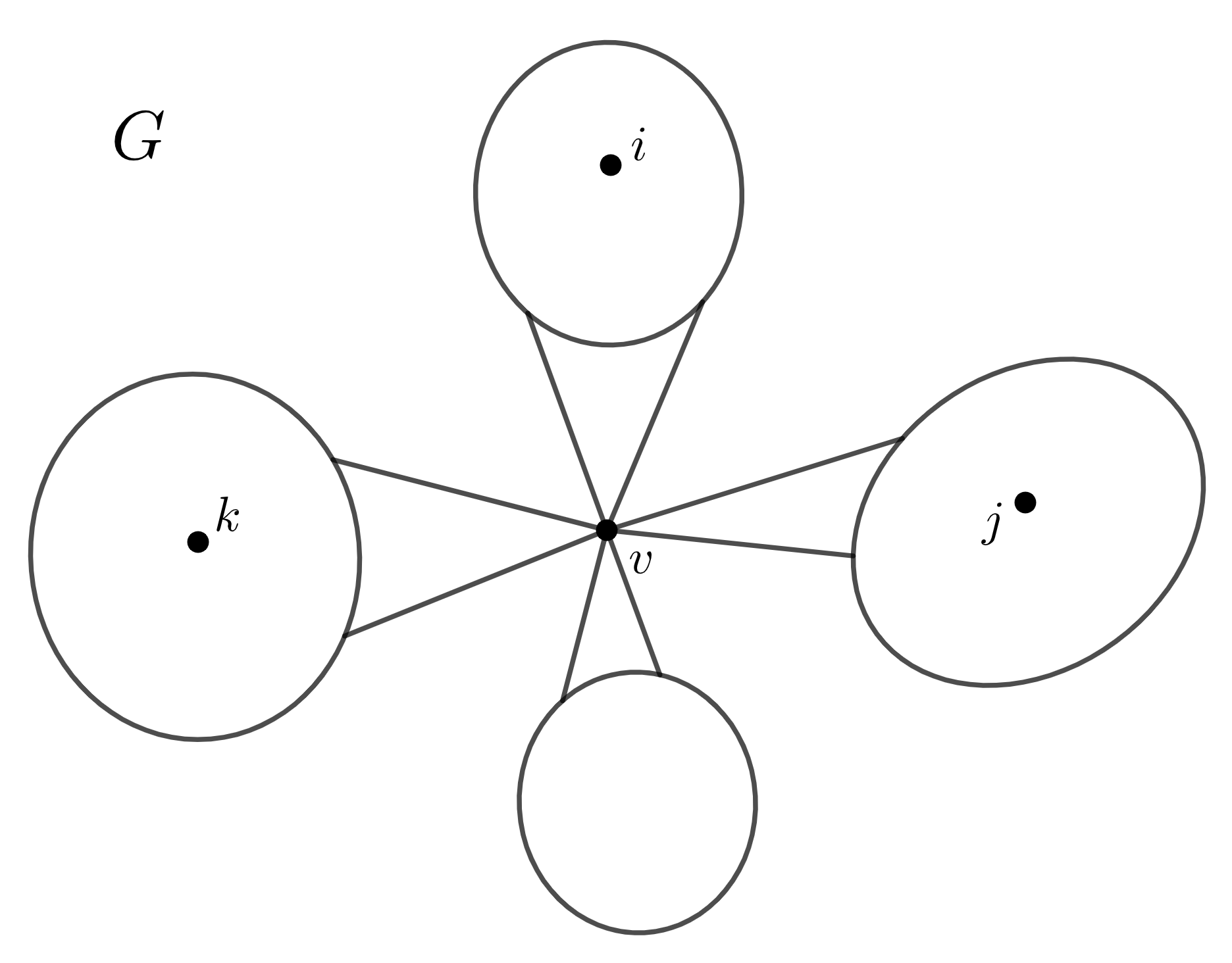}\end{center}
	\caption{A representation of a graph $G$ such that the vertices $i$, $j$ and $k$ are in distinct connected components of $G\setminus v$.}
	\label{fig_graph}
\end{figure}

Our proof of Lemma~\ref{lem:restriction} will follow from conditions imposed by the pairwise strong cospectrality of $i$, $j$ and $k$ on $\alpha_i^{G\setminus v}$, $\alpha_j^{G\setminus v}$ and $\alpha_k^{G\setminus v}$.

Observe that, since $i$, $j$ and $k$ are in distinct connected components of $G\setminus v$, $\alpha_i^{G\setminus v}=\alpha_i^{G\setminus \{v,j\}}=\alpha_i^{G\setminus \{v,j,k\}}$ and $\lambda_{i v}^G=\lambda_{i v}^{G\setminus j}=\lambda_{i v}^{G\setminus \{j,k\}}$, and similar identities by changing the roles of $i$, $j$ and $k$. In what follows, we make heavy use of these facts without further mention.

\begin{lemma}\label{lemma:lambda_equal_zero} If $\lambda_{i v}^G(\theta)=0$, then $\alpha_i^G(\theta)=\alpha_i^{G\setminus v}(\theta)=\alpha_i^{G\setminus j}(\theta)$.
\end{lemma}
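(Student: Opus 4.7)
The statement asks us to show two equalities under the hypothesis $\lambda_{iv}^G(\theta)=0$: namely $\alpha_i^G(\theta)=\alpha_i^{G\setminus v}(\theta)$ and $\alpha_i^{G\setminus v}(\theta)=\alpha_i^{G\setminus j}(\theta)$. Both will follow from Lemma~\ref{lambda_zero} together with the observation, made in the italicized preamble to the subsection, that the quantities $\alpha_i^{G\setminus v}$ and $\lambda_{iv}^G$ are insensitive to deleting vertices lying in different components of $G\setminus v$ from $i$.

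The first equality is immediate: Lemma~\ref{lambda_zero}, applied verbatim with the roles $i\mapsto i$ and $j\mapsto v$, says that $\lambda_{iv}^G(\theta)=0$ implies $\alpha_i^G(\theta)=\alpha_i^{G\setminus v}(\theta)$. So the only content is the second equality.

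For the second equality, the plan is to reapply Lemma~\ref{lambda_zero} but this time inside the graph $G\setminus j$, with respect to the pair $(i,v)$. Concretely, in $G\setminus j$ the two vertices $i$ and $v$ are present, and $v$ still plays the role of a cut vertex separating the components containing $i$ and $k$. By the observation recalled just before this lemma, $\lambda_{iv}^{G\setminus j}=\lambda_{iv}^G$ (the path-sum and the denominator both factor through the components containing $j$ and $k$, and these factors cancel). Hence the hypothesis $\lambda_{iv}^G(\theta)=0$ translates into $\lambda_{iv}^{G\setminus j}(\theta)=0$. Applying Lemma~\ref{lambda_zero} now inside $G\setminus j$ yields $\alpha_i^{G\setminus j}(\theta)=\alpha_i^{(G\setminus j)\setminus v}(\theta)=\alpha_i^{G\setminus\{v,j\}}(\theta)$. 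To finish, use the identity $\alpha_i^{G\setminus\{v,j\}}=\alpha_i^{G\setminus v}$, which is again recorded in the preamble and follows from the fact that the components of $G\setminus v$ that do not contain $i$ contribute the same factor to numerator and denominator.

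I do not anticipate any real obstacle here; this is a short derivation and the only thing to be careful about is that the double-zero-versus-simple-zero argument from Lemma~\ref{lambda_zero} is being applied in $G\setminus j$ rather than in $G$, which is legitimate because $\alpha_v^{G\setminus\{i,j\}}$ still has only simple zeros by Lemma~\ref{lemma:derivative_quotient}, and because $\lambda_{iv}^{G\setminus j}$ inherits its double-zero structure from its definition as minus the square of a rational function.
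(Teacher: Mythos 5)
Your proposal is correct and follows essentially the same route as the paper: both equalities come from applying Lemma~\ref{lambda_zero} to the pair $(i,v)$, once in $G$ and once in $G\setminus j$, using the identities $\lambda_{iv}^{G\setminus j}=\lambda_{iv}^G$ and $\alpha_i^{G\setminus\{j,v\}}=\alpha_i^{G\setminus v}$ recorded in the preamble. No issues.
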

\begin{proof} First, note that,

\[\alpha_i^G=\alpha_i^{G\setminus v}+\dfrac{\lambda_{i v}^G}{\alpha_v^{G\setminus i}}\text{ and } \alpha_i^{G\setminus j}=\alpha_i^{G\setminus \{j,v\}}+\dfrac{\lambda_{i v}^G}{\alpha_v^{G\setminus \{j, i\}}}.\]

\noindent As a consequence, by Lemma~\ref{lambda_zero}, $\alpha_i^G(\theta)=\alpha_i^{G\setminus v}(\theta)$ and $\alpha_i^{G\setminus j}(\theta )=\alpha_i^{G\setminus \{j,v\}}(\theta)$, but it also follows that $\alpha_i^{G\setminus \{j,v\}}(\theta)=\alpha_i^{G \setminus v}(\theta)$.
\end{proof}

This last result has the following crucial corollary for strongly cospectral vertices.

\begin{corollary}\label{lambda_not_zero} If vertices $i$ and $j$ are strongly cospectral and $\alpha_i^{G\setminus v}(\theta)=0$, then $\lambda_{i v}^G(\theta)\neq 0$.
\end{corollary}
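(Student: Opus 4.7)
The plan is to argue by contradiction, chaining together the two immediately preceding results: Lemma~\ref{lemma:lambda_equal_zero} and the characterization of strong cospectrality via the $\alpha$'s in Lemma~\ref{lemma:strongly_cospectral_cf}.

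Assume for contradiction that $\lambda_{iv}^G(\theta)=0$. By Lemma~\ref{lemma:lambda_equal_zero} applied with the roles of $j$ and $k$ suitably placed (we only need the fact that $i$ and $v$ are distinct vertices and that we can delete $j$), we obtain
\[
\alpha_i^G(\theta)=\alpha_i^{G\setminus v}(\theta)=\alpha_i^{G\setminus j}(\theta).
\]
Combined with the hypothesis $\alpha_i^{G\setminus v}(\theta)=0$, this forces all three quantities to vanish; in particular $\alpha_i^{G\setminus j}(\theta)=0$ and $\alpha_i^G(\theta)=0$.

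Now invoke Lemma~\ref{lemma:strongly_cospectral_cf}: since $i$ and $j$ are strongly cospectral, the vanishing of $\alpha_i^{G\setminus j}$ at $\theta$ implies that $\alpha_i^G(\theta)=\alpha_j^G(\theta)\neq 0$. This contradicts $\alpha_i^G(\theta)=0$ obtained in the previous step, so the assumption $\lambda_{iv}^G(\theta)=0$ is untenable. There is no real obstacle here; the entire content of the corollary is to package Lemma~\ref{lemma:lambda_equal_zero} and Lemma~\ref{lemma:strongly_cospectral_cf} into a single statement that can be cited conveniently in the proof of Lemma~\ref{lem:restriction}.
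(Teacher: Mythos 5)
Your argument is correct and is essentially the paper's own proof: assume $\lambda_{iv}^G(\theta)=0$, apply Lemma~\ref{lemma:lambda_equal_zero} to force $\alpha_i^G(\theta)=\alpha_i^{G\setminus j}(\theta)=\alpha_i^{G\setminus v}(\theta)=0$, and contradict the characterization of strong cospectrality in Lemma~\ref{lemma:strongly_cospectral_cf}. Nothing is missing.
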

\begin{proof} Assume otherwise, then, by Lemma~\ref{lemma:lambda_equal_zero}, both $\alpha_i^G(\theta)$ and $\alpha_i^{G\setminus j}(\theta)$ are equal to $\alpha_i^{G\setminus v}(\theta)=0$, which is impossible by Lemma~\ref{lemma:strongly_cospectral_cf}.
\end{proof}

The next results develop the consequences of the conclusion of Corollary~\ref{lambda_not_zero}.

\begin{lemma}\label{forcing_v_in_infty} If $\alpha_i^{G\setminus v}(\theta)=0$ and $\lambda_{i v}^G(\theta)\neq 0$, then, 
\[\alpha_v^{G}(\theta)=\alpha_v^{G\setminus j}(\theta)=\alpha_v^{G\setminus \{j,k\}}(\theta)=\infty.\]
\end{lemma}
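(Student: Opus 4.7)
The plan is to apply Lemma~\ref{lambda_finite} with the pair of vertices $\{i,v\}$ in each of the graphs $H\in\{G,\ G\setminus j,\ G\setminus\{j,k\}\}$, obtaining in each case
\[
\alpha_v^H(\theta)=\alpha_v^{H\setminus i}(\theta)+\dfrac{\lambda_{iv}^H(\theta)}{\alpha_i^{H\setminus v}(\theta)}.
\]
The assumption that $i,j,k$ lie in distinct components of $G\setminus v$, together with the identities recorded in the paragraph before Lemma~\ref{lemma:lambda_equal_zero}, tells us that $\lambda_{iv}^H$ and $\alpha_i^{H\setminus v}$ are identical rational functions of $t$ for all three choices of $H$. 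So the three right-hand sides share the very same $\lambda/\alpha$ ratio, and the problem reduces to showing that this single ratio is forced to equal $\infty$ at $\theta$.

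First I would rule out $\lambda_{iv}^G(\theta)=\infty$: if it were, Lemma~\ref{lambda_infinite} applied to $\{i,v\}$ would force $\alpha_i^{G\setminus v}(\theta)=\infty$, contradicting the hypothesis $\alpha_i^{G\setminus v}(\theta)=0$. Hence $\lambda_{iv}^G(\theta)$ is a finite, strictly negative number (non-zero by hypothesis, and always non-positive where finite). Since $\alpha_i^{G\setminus v}$ has only simple zeros by Lemma~\ref{lemma:derivative_quotient}, the convention $C/0=\infty$ for finite non-zero $C$ gives $\lambda_{iv}^G(\theta)/\alpha_i^{G\setminus v}(\theta)=\infty$. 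The convention $C+\infty=\infty$ then makes $\alpha_v^G(\theta)=\alpha_v^{G\setminus i}(\theta)+\infty=\infty$ regardless of whether the first summand is finite or infinite.

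The same computation runs verbatim in $G\setminus j$ and $G\setminus\{j,k\}$, because the relevant inputs $\lambda_{iv}$ and $\alpha_i^{H\setminus v}$ at $\theta$ are unchanged by the removal of $j$ or of $\{j,k\}$. Thus $\alpha_v^{G\setminus j}(\theta)=\alpha_v^{G\setminus\{j,k\}}(\theta)=\infty$ as well. The only subtle point I expect is the bookkeeping with the $\infty$-arithmetic conventions and the need to cleanly dispose of the $\lambda_{iv}^G(\theta)=\infty$ case at the start; beyond that, the argument is immediate from simplicity of the zeros of $\alpha_i^{G\setminus v}$ and the sign constraint on $\lambda$.
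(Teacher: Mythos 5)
Your argument is correct and matches the paper's proof essentially step for step: rule out $\lambda_{iv}^G(\theta)=\infty$ via Lemma~\ref{lambda_infinite}, use the component structure of $G\setminus v$ to see that the three contraction identities share the same $\lambda_{iv}$ and the same $\alpha_i^{H\setminus v}(\theta)=0$, and conclude by Lemma~\ref{lambda_finite} with the convention $C/0=\infty$. The remark about simple zeros is not needed for this step, but it does no harm.
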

\begin{proof} First, note that $\lambda_{i v}^G(\theta)\neq \infty$ because if this was not the case then, by Lemma~\ref{lambda_infinite}, $\alpha_i^{G\setminus v}(\theta)=\infty$, which is impossible.

Then, note that, 

\begin{itemize}
\item[] $\alpha_v^{G}=\alpha_v^{G\setminus i}+ \lambda_{i v}^G/\alpha_i^{G\setminus v}$,

\item[] $\alpha_v^{G\setminus j}=\alpha_v^{G\setminus \{j,i\}}+\lambda_{i v}^G/\alpha_i^{G\setminus \{j,v\}},$
\item[] $\alpha_v^{G\setminus \{j,k\}}=\alpha_v^{G\setminus \{j,k,i\}}+\lambda_{i v}^G/\alpha_i^{G\setminus \{j,k,v\}}.$
\end{itemize}

\noindent But we also have that $\alpha_i^{G\setminus \{j,k,v\}}(\theta)=\alpha_i^{G\setminus \{j,v\}}(\theta)=\alpha_i^{G\setminus v}(\theta)=0$. It follows from Lemma~\ref{lambda_finite} that $\alpha_v^{G}(\theta)=\alpha_v^{G\setminus j}(\theta)=\alpha_v^{G\setminus \{j,k\}}(\theta)=\infty$.
\end{proof}

The next proposition presents the main consequence from the conclusion of Corollary~\ref{lambda_not_zero}, from which more consequences will follow with the hypothesis of strong cospectrality of $j$ and $k$.

\begin{lemma}\label{forcing_equality} If $\alpha_i^{G\setminus v}(\theta)=0$ and $\lambda_{i v}^G(\theta)\neq 0$, then,
\[\alpha_j^{G}(\theta)=\alpha_j^{G\setminus v}(\theta)=\alpha_j^{G\setminus k}(\theta).\]
\end{lemma}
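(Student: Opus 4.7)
My plan is to apply the contraction identity \eqref{eq:alphaslambdas} to the pair $(j,v)$ in both $G$ and $G\setminus k$. Since $j$ and $k$ lie in different connected components of $G\setminus v$, every path from $j$ to $v$ in $G$ stays in the component of $j$, so $\alpha_j^{G\setminus v}=\alpha_j^{G\setminus\{k,v\}}$ and $\lambda_{jv}^G=\lambda_{jv}^{G\setminus k}$. The two identities then read $\alpha_j^G=\alpha_j^{G\setminus v}+\lambda_{jv}^G/\alpha_v^{G\setminus j}$ and $\alpha_j^{G\setminus k}=\alpha_j^{G\setminus v}+\lambda_{jv}^G/\alpha_v^{G\setminus\{j,k\}}$, so the desired triple equality reduces to showing that both correction terms vanish at $\theta$, or else that all three quantities are $\infty$.

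The key external input is Lemma~\ref{forcing_v_in_infty}, which under our hypotheses gives $\alpha_v^{G\setminus j}(\theta)=\alpha_v^{G\setminus\{j,k\}}(\theta)=\alpha_v^G(\theta)=\infty$. Interchanging the roles of $j$ and $k$ in that same lemma also yields $\alpha_v^{G\setminus k}(\theta)=\infty$, a fact I will need in one subcase.

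Next, I would split on whether $\lambda_{jv}^G(\theta)$ is finite. In the finite case, Lemma~\ref{lambda_finite} together with the convention $C/\infty=0$ shows that both fractions $\lambda_{jv}^G(\theta)/\alpha_v^{G\setminus j}(\theta)$ and $\lambda_{jv}^G(\theta)/\alpha_v^{G\setminus\{j,k\}}(\theta)$ equal $0$, giving $\alpha_j^G(\theta)=\alpha_j^{G\setminus k}(\theta)=\alpha_j^{G\setminus v}(\theta)$. In the case $\lambda_{jv}^G(\theta)=\infty$, the first clause of Lemma~\ref{lambda_infinite} forces $\alpha_j^{G\setminus v}(\theta)=\alpha_v^{G\setminus j}(\theta)=\infty$; combined with $\alpha_v^G(\theta)=\infty$ the second clause of Lemma~\ref{lambda_infinite} yields $\alpha_j^G(\theta)=\infty$, and applying the same argument in $G\setminus k$ (where $\lambda_{jv}^{G\setminus k}(\theta)=\lambda_{jv}^G(\theta)=\infty$ and $\alpha_v^{G\setminus k}(\theta)=\infty$) gives $\alpha_j^{G\setminus k}(\theta)=\infty$. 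All three quantities are then $\infty$ and hence equal.

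The main obstacle I anticipate is bookkeeping around the $\infty$-arithmetic conventions: in particular, I must be careful to invoke the symmetric instance of Lemma~\ref{forcing_v_in_infty} (with $k$ in place of $j$) to obtain $\alpha_v^{G\setminus k}(\theta)=\infty$, since without it the $\lambda_{jv}^G(\theta)=\infty$ subcase for $\alpha_j^{G\setminus k}$ cannot be closed via Lemma~\ref{lambda_infinite}. Everything else is a mechanical application of the contraction identities and the fact that paths from $j$ to $v$ never touch $k$.
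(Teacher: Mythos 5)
Your proposal is correct and follows essentially the same route as the paper: split on whether $\lambda_{jv}^G(\theta)$ is finite, use Lemma~\ref{forcing_v_in_infty} to force $\alpha_v^{G\setminus j}(\theta)=\alpha_v^{G\setminus\{j,k\}}(\theta)=\alpha_v^{G}(\theta)=\infty$ (and, by the $j\leftrightarrow k$ symmetry, $\alpha_v^{G\setminus k}(\theta)=\infty$), then apply Lemma~\ref{lambda_finite} in the finite case and both clauses of Lemma~\ref{lambda_infinite} in the infinite case. Your explicit invocation of the symmetric instance of Lemma~\ref{forcing_v_in_infty} to get $\alpha_v^{G\setminus k}(\theta)=\infty$ is a point the paper leaves implicit, so that extra care is welcome rather than a deviation.
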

\begin{proof} If $\lambda_{j v}^G(\theta)= \infty$, then Lemma~\ref{lambda_infinite} implies $\alpha_j^{G\setminus v}(\theta)=\infty$. But by Lemma~\ref{forcing_v_in_infty} we also have $\alpha_v^{G}(\theta)=\alpha_v^{G\setminus k}(\theta)=\infty$, from which follows by the second part of Lemma~\ref{lambda_infinite} that $\alpha_j^{G}(\theta)=\alpha_j^{G\setminus k}(\theta)=\infty$. It follows that, $\alpha_j^{G}(\theta)=\alpha_j^{G\setminus v}(\theta)=\alpha_j^{G\setminus k}(\theta)=\infty$, as we wanted.

Now, assume that $\lambda_{j v}^G(\theta)\neq  \infty$. Observe that,
\[\alpha_j^{G}=\alpha_j^{G\setminus v}+\dfrac{\lambda_{j v}^G}{\alpha_v^{G\setminus j}},\quad \alpha_j^{G\setminus k}=\alpha_j^{G\setminus \{k,v\}}+\dfrac{\lambda_{j v}^G}{\alpha_v^{G\setminus \{k, j\}}}.\]

\noindent But by Lemma~\ref{forcing_v_in_infty} we also have  $\alpha_v^{G\setminus j}(\theta)=\alpha_v^{G\setminus \{j,k\}}(\theta)=\infty$. It then follows by Lemma~\ref{lambda_finite} that, $\alpha_j^{G}(\theta)=\alpha_j^{G\setminus v}(\theta)$ and $\alpha_j^{G\setminus k}(\theta)=\alpha_j^{G\setminus \{k,v\}}(\theta)=\alpha_j^{G\setminus v}(\theta)$, which finishes the proof.
\end{proof}

If the vertices $j$ and $k$ are cospectral, this last result has the following corollary.

\begin{corollary}\label{equality_of_alphas} If $\alpha_i^{G\setminus v}(\theta)=0$ and $\lambda_{i v}^G(\theta)\neq 0$, and $j$ and $k$ are cospectral, then, 
\[\alpha_j^{G\setminus k}(\theta)=\alpha_j^{G\setminus v}(\theta)=\alpha_j^{G}(\theta)=\alpha_k^{G}(\theta)=\alpha_k^{G\setminus v}(\theta)=\alpha_k^{G\setminus j}(\theta).\]

\noindent Furthermore, if $j$ and $k$ are strongly cospectral, then this common value is different than zero.
\end{corollary}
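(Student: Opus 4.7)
The plan is to derive the six equalities directly from Lemma~\ref{forcing_equality} applied in two symmetric forms, and then to obtain non-vanishing in the strongly cospectral case from the characterization in Lemma~\ref{lemma:strongly_cospectral_cf}.

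First, I would invoke Lemma~\ref{forcing_equality} as stated. Under the hypotheses $\alpha_i^{G\setminus v}(\theta)=0$ and $\lambda_{iv}^G(\theta)\neq 0$, it yields
\[\alpha_j^{G}(\theta)=\alpha_j^{G\setminus v}(\theta)=\alpha_j^{G\setminus k}(\theta).\]
Next I would observe that the hypotheses of Lemma~\ref{forcing_equality} are symmetric in $j$ and $k$: they involve only $i$ and $v$, and the standing assumption of this subsection (that $i$, $j$, $k$ lie in distinct components of $G\setminus v$) is itself symmetric in $j$ and $k$. Swapping the roles of $j$ and $k$ in that lemma therefore gives
\[\alpha_k^{G}(\theta)=\alpha_k^{G\setminus v}(\theta)=\alpha_k^{G\setminus j}(\theta).\]
To bridge these two chains, I would use the cospectrality hypothesis on $j$ and $k$: by the opening observation in the proof of Lemma~\ref{lemma:strongly_cospectral_cf} ($\phi^{G\setminus j}=\phi^{G\setminus k}$ iff $\alpha_j^G=\alpha_k^G$), we have $\alpha_j^G=\alpha_k^G$ as rational functions, hence $\alpha_j^G(\theta)=\alpha_k^G(\theta)$. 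Concatenating the three equalities yields the full chain asserted by the corollary.

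For the last sentence, assume in addition that $j$ and $k$ are strongly cospectral, and suppose for contradiction that the common value equals $0$. Then in particular $\alpha_j^{G\setminus k}(\theta)=0$, so the ``only if'' direction of Lemma~\ref{lemma:strongly_cospectral_cf}, applied to the strongly cospectral pair $\{j,k\}$, forces $\alpha_j^G(\theta)=\alpha_k^G(\theta)\neq 0$. But $\alpha_j^G(\theta)$ is exactly the common value assumed to vanish, a contradiction.

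I do not expect a real obstacle here: the statement is essentially a clean packaging of Lemma~\ref{forcing_equality} (used twice), the elementary $\alpha$-translation of cospectrality from Lemma~\ref{lemma:strongly_cospectral_cf}, and the non-vanishing clause of the same lemma. The only point that deserves a brief justification is the symmetry in $j$ and $k$ when reapplying Lemma~\ref{forcing_equality}, which I would verify explicitly by pointing to the component assumption at the start of Subsection~4.2.
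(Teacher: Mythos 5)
Your proposal is correct and follows essentially the same route as the paper: two symmetric applications of Lemma~\ref{forcing_equality}, bridged by the cospectrality identity $\alpha_j^G=\alpha_k^G$, with the non-vanishing clause obtained from Lemma~\ref{lemma:strongly_cospectral_cf} exactly as in the paper. Your explicit remark that the hypotheses of Lemma~\ref{forcing_equality} are symmetric in $j$ and $k$ is a small point the paper leaves implicit, but it changes nothing substantive.
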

\begin{proof} By Lemma~\ref{forcing_equality}, $\alpha_j^{G}(\theta)=\alpha_j^{G\setminus v}(\theta)=\alpha_j^{G\setminus k}(\theta)$ and $\alpha_k^{G}(\theta)=\alpha_k^{G\setminus v}(\theta)=\alpha_k^{G\setminus j}(\theta)$. But $j$ and $k$ are cospectral, so $\alpha_j^{G}(\theta)=\alpha_k^{G}(\theta)$. This proves the first part of the statement. 

Now, if $j$ and $k$ are strongly cospectral, then the common value of these quantities cannot be zero. To see this, observe that if this were not the case, then in particular $\alpha_j^{G}(\theta)=\alpha_j^{G\setminus k}(\theta)=0$, which is impossible by Lemma~\ref{lemma:strongly_cospectral_cf}.
\end{proof}

As a consequence we are ready to prove the key lemma.
\setcounter{lem6}{5}
\begin{lemma*} Assume vertices $i$, $j$ and $k$ are pairwise strongly cospectral in a graph $G$, and that there exists $v$ so that each $i$, $j$ and $k$ lie in a different component of $G \setminus v$. Thus, for any $\theta \in \Rds$, if $\alpha_i^{G\setminus v}(\theta)=0$, then $\alpha_j^{G\setminus v}(\theta)=\alpha_k^{G\setminus v}(\theta)\neq 0$.
\end{lemma*}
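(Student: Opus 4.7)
The plan is to obtain the key lemma as an essentially immediate consequence of the two corollaries already assembled in this subsection. First, I would apply Corollary~\ref{lambda_not_zero} to the strongly cospectral pair $i,j$: since by hypothesis $\alpha_i^{G\setminus v}(\theta)=0$, this immediately yields $\lambda_{iv}^G(\theta)\neq 0$. With both conditions $\alpha_i^{G\setminus v}(\theta)=0$ and $\lambda_{iv}^G(\theta)\neq 0$ in hand, and using that $j$ and $k$ are strongly cospectral (in particular cospectral), I would then invoke Corollary~\ref{equality_of_alphas} to conclude
\[
\alpha_j^{G\setminus k}(\theta)=\alpha_j^{G\setminus v}(\theta)=\alpha_j^{G}(\theta)=\alpha_k^{G}(\theta)=\alpha_k^{G\setminus v}(\theta)=\alpha_k^{G\setminus j}(\theta)\neq 0.
\]
Reading off the relevant portion of this chain gives exactly $\alpha_j^{G\setminus v}(\theta)=\alpha_k^{G\setminus v}(\theta)\neq 0$, which is the desired conclusion.

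At this point in the paper there is essentially no remaining obstacle: all the work has been packaged into the preceding lemmas and corollaries. The genuine content sits in Lemma~\ref{forcing_v_in_infty}, which upgrades a zero of $\alpha_i^{G\setminus v}$ (together with $\lambda_{iv}^G(\theta)\neq 0$) into simultaneous poles of $\alpha_v^{G}$, $\alpha_v^{G\setminus j}$ and $\alpha_v^{G\setminus\{j,k\}}$ at $\theta$, and in Lemma~\ref{forcing_equality}, which then propagates those poles, via Lemmas~\ref{lambda_finite} and~\ref{lambda_infinite}, into the equality $\alpha_j^G(\theta)=\alpha_j^{G\setminus v}(\theta)=\alpha_j^{G\setminus k}(\theta)$ (and symmetrically with $j$ and $k$ swapped). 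The cospectrality of $j$ and $k$ then glues the two symmetric conclusions together, and the strong cospectrality of the pair $j,k$ rules out the value zero via Lemma~\ref{lemma:strongly_cospectral_cf}. Once these pieces are in place, the lemma statement is a two-line assembly, and that is exactly how I would write the proof: invoke Corollary~\ref{lambda_not_zero} to validate the side hypothesis of Corollary~\ref{equality_of_alphas}, then quote the latter.
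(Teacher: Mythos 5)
Your proof is correct and is essentially identical to the paper's own two-line argument: apply Corollary~\ref{lambda_not_zero} to the pair $i,j$ to get $\lambda_{iv}^G(\theta)\neq 0$, then apply Corollary~\ref{equality_of_alphas} to the pair $j,k$ to conclude. Your accompanying remarks about where the real work lives (Lemmas~\ref{forcing_v_in_infty} and~\ref{forcing_equality}) accurately describe the structure of the paper's argument as well.
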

\begin{proof} Note that by the Corollary~\ref{lambda_not_zero}, as $i$ and $j$ are strongly cospectral, it follows that $\lambda_{i v}^G(\theta)\neq 0$. But then by the Corollary~\ref{equality_of_alphas}, as $j$ and $k$ are strongly cospectral, it follows that $\alpha_j^{G\setminus v}(\theta)=\alpha_k^{G\setminus v}(\theta)\neq 0$.
\end{proof}

\subsection*{Acknowledgements}

Emanuel Juliano acknowledges the scholarship FAPEMIG/PROBIC. Authors thank Chris Godsil for bringing up the question that motivated this paper.

%%%%%%%%%%%%%%%%%%%%%%%%%%%%%%%%%%%%%%%%%%%%%%%%%%%%%%%%%%%%%%%%%%%%%%%%%%%%%%%%
\bibliographystyle{plain}
\bibliography{references.bib}

%%%%%%%%%%%%%%%%%%%%%%%%%%%%%%%%%%%%%%%%%%%%%%%%%%%%%%%%%%%%%%%%%%%%%%%%%%%%%%%%
	
\end{document}